\DeclareSymbolFont{cyrletters}{OT2}{wncyr}{m}{n}
\DeclareMathSymbol{\Sha}{\mathalpha}{cyrletters}{"58}
\newcommand{\subjclass}[2][2010]{%
  \let\@oldtitle\@title%
  \gdef\@title{\@oldtitle\footnotetext{#1 \emph{Mathematics subject classification.} #2}}%
}
\newcommand{\keywords}[1]{%
  \let\@@oldtitle\@title%
  \gdef\@title{\@@oldtitle\footnotetext{\emph{Key words and phrases.} #1.}}%
}
\newcommand{\Z}{\mathds{Z}}
\newcommand{\N}{\mathds{N}}
\newcommand{\Q}{\mathds{Q}}
\newcommand{\C}{\mathds{C}}
\newcommand{\p}{\mathfrak{p}}
\newcommand{\Gal}{\textup{Gal}}
\newcommand{\rg}{\textup{rank}}
\newcommand{\Ok}{\mathcal{O}}
\newcommand{\Cl}{\mathrm{Cl}}
\newcommand{\frp}{\mathfrak{p}}
\renewcommand{\frm}{\mathfrak{m}}
\newcommand{\Ir}{\mathrm{Ir}}
\newcommand{\ord}{\mathrm{ord}}
\title{On the Gross order of vanishing conjecture\\ for large vanishing orders}
\author{Martin Hofer and Sören Kleine}
\date{}
\subjclass[2010]{Primary 11R23, 11R42}
\keywords{Gross-Kuz'min conjecture, Gross order of vanishing conjecture, Iwasawa main conjecture}
\newtheorem{lemma}{Lemma}[section] 
\newtheorem{prop}[lemma]{Proposition} 
\newtheorem{conjecture}[lemma]{Conjecture} 
\newtheorem{thm}[lemma]{Theorem} 
\newtheorem*{thm*}{Theorem} 
\newtheorem{cor}[lemma]{Corollary}
\newtheorem{claim}[lemma]{Claim}
\theoremstyle{plain} 
\newcommand{\thistheoremname}{}
 \newtheorem*{genericthm*}{\thistheoremname}
\newenvironment{namedthm*}[1]
  {\renewcommand{\thistheoremname}{#1}%
   \begin{genericthm*}}
  {\end{genericthm*}}
\theoremstyle{definition}
\newtheorem{rem}[lemma]{Remark} 
\newtheorem{example}[lemma]{Example}
\DeclareMathOperator{\Hom}{\mathrm{Hom}}
\DeclareMathOperator{\GVC}{\mathrm{GVC}}
\DeclareMathOperator{\GKC}{\mathrm{GKC}}
\begin{document}

\maketitle

\begin{abstract}
 We prove the Gross order of vanishing conjecture in special cases where the vanishing order of the character in question can be arbitrarily large. In almost all previously known cases the vanishing order is zero or one. 
 
 One major ingredient of our proofs is the equivalence of this conjecture to the Gross-Kuz'min conjecture. We present here a direct proof of this equivalence, using only the known validity of the Iwasawa Main Conjecture over totally real fields. 
\end{abstract}

\section{Introduction} \label{introduction} 
One of the major breakthroughs in the theory of $L$-functions of number fields in recent years was the proof of one part of the Gross-Stark conjecture for one-dimensional characters by Dasgupta, Kakde and Ventullo (\cite{DaKaVe18}). This conjecture compares, in an appropriate setting, the leading terms of the Deligne-Ribet $p$-adic $L$-function and the Artin $L$-function at $s=0$ for each character. Although the main result of \cite{DaKaVe18} is unconditional, only the \emph{Gross order of vanishing conjecture} (abbreviated by $\GVC$) asserts that the vanishing orders of the two $L$-functions are equal.

Currently, $\GVC$ is basically only known unconditionally in cases where the vanishing order of the Artin $L$-function at $s=0$ is zero or one, e.g if the base extension $K / \Q$ is abelian, by work of Gross \cite{Gro81} or Emsalem \cite{Ems83}.

Seemingly completely independent of the above, there is a classical conjecture in Iwasawa theory, called \emph{Gross-Kuz'min conjecture} (abbreviated by $\GKC$), which asserts that for the cyclotomic $\Z_{p}$-extension of a number field $K$ ($p$ a fixed prime) with Galois group $\Gamma$, the quotient of $\Gamma$-coinvariants of a certain Iwasawa module is finite. This conjecture (in this generality) has been formulated first by Kuz'min in \cite{Kuz72} and independently by Jaulent in \cite{Jau86}. It is known for abelian extensions $K / \Q$ by work of Greenberg (\cite{green-l-adic}) and it has been studied intensely by Jaulent, Kolster, Kuz'min, Nguyen Quang Do and many others. 

In the first part of this manuscript we show that $\GVC$ is (under a standard assumption) equivalent to an appropriate characterwise variant of $\GKC$. In particular, we allow the characters to have arbitrary dimension and the base extensions to be non-abelian over $\Q$.

In the second part we construct examples with large vanishing orders for which we prove that $\GKC$ and therefore, via the equivalence, also $\GVC$ holds.

In the remaining part of the introduction we state our results more explicitly and sketch our general proof strategy.

\paragraph{The Gross-Kuz'min conjecture}
Fix a prime $p \neq 2$. Let $K$ be a number field and let $K_{\infty}/K$ be the cyclotomic $\Z_{p}$-extension with Galois group $\Gamma$. We denote by $K_{n}$ the $n$-th layer of this extension. Furthermore, let $A_{K_{n}}$ be the $p$-Sylow subgroup of the ideal class group of $K_{n}$ and $B_{K_{n}}$ the subgroup of $A_{K_{n}}$ generated by the prime ideals above $p$. We set $A'_{K_{n}}=A_{K_{n}}/B_{K_{n}}$ and $A_{K_{\infty}}^{\prime}:= \varprojlim_{n} A'_{K_{n}}$, where the projective limit is taken with respect to norm maps. With this notation the Gross-Kuz'min conjecture can be formulated as the assertion that the $\Gamma$-coinvariants of $A_{K_{\infty}}^{\prime}$ are finite.

Let now $K$ be a finite Galois CM-extension of a totally real number field $R$ with Galois group $G$, and  let $\chi$ be a totally odd irreducible character of $G$ over $\C$. 

In the following, we assume that $K \cap R_\infty = R$, where $R_\infty$ denotes the cyclotomic $\Z_p$-extension of $R$. For the character $\chi$,  $V_{A'}^{(\chi)}$ shall denote the $\chi$-component of the vector space $V_{A'}:= A_{K_{\infty}}^{\prime} \otimes_{\Z_p} \overline{\Q}_{p}$ and $f_{A',\chi}(T) \in \overline{\Q}_{p}[T]$ shall be the characteristic polynomial of the linear map induced by the action of $T = \gamma - 1$ on $V_{A'}^{(\chi)}$, where $\gamma \in \Gamma$ denotes a topological generator. With this notation we can formulate the following conjecture: 
\begin{namedthm*}{Conjecture~$\GKC(K/R, \chi)$}
\label{conj_GKC_comp_intro}
Let $K$ be a Galois CM-extension of a totally real field $R$ satisfying $K \cap R_\infty = R$.
If $\chi$ is a totally odd irreducible character of $\Gal(K/R)$ over $\C$, then $f_{A',\chi}(T)$ is not divisible by $T$.
\end{namedthm*}

\paragraph{The Gross order of vanishing conjecture}

We keep the notation from the last paragraph and additionally introduce a set $S$ which consists of the places above $p$ and above $\infty$. Then for an irreducible character of $G$ over $\C$, we denote by $L_{S}(s, \chi)$ the $S$-truncated Artin $L$-function. Moreover, for a totally even character $\psi$ of $G$ over $\C_{p}$ we denote by $L_{p, S}(s, \psi)$ the $S$-truncated Deligne-Ribet $p$-adic $L$-function (see e.g. \cite{DeRi80} and \cite{Gre83}). Then we can recall the following conjecture.
\begin{namedthm*}{Conjecture~$\GVC(K/R, \chi)$}
\label{conj_GVC_intro}
Let $K$ be a Galois CM-extension of a totally real field $R$ with Galois group $G$. If $\chi$ is a totally odd irreducible character of $G$ over $\C_{p}$, then
\[
\mathrm{ord}_{s=0}L_{p, S}(s, \check \chi \omega_{R}) = \mathrm{ord}_{s=0} L_{S}(s, \chi); 
\]
here we identify $\chi$ with a totally odd irreducible character of $G$ over $\C$ via a fixed isomorphism $j\colon \C \cong \C_{p}$. Moreover, $\check \chi$ is the contragredient character and $\omega_{R}$ is the Teichmüller character.
\end{namedthm*}
In \cite{Bur18} the inequality '$\geq$' of this conjecture is shown unconditionally. We will recover this inequality (under our hypothesis $K \cap R_\infty = R$) in course of the proof that the two conjectures are equivalent in an appropriate setting.

\paragraph{Equivalence of the conjectures}

Fix $K/R$ and $G =\Gal(K/R)$ as above, and recall that $R_{\infty}$ denotes the cyclotomic $\Z_{p}$-extension of $R$.

In Section \ref{section:equivalence}, we will prove the following 
\begin{namedthm*}{Theorem~A}
\label{thm_equiv_intro}
Let $K$ be a finite Galois CM-extension of a totally real number field $R$ with Galois group $G$, let $p$ be an odd prime and let $\chi$ be a totally odd irreducible character of $G$ over $\C$ resp. $\C_{p}$. Assume that $K \cap R_{\infty} = R$. Then $\GKC(K/R,\chi)$ holds if and only if $\GVC(K/R, \chi)$ holds.
\end{namedthm*} 

Actually an equivalence of appropriate formulations of the Gross-Kuz'min and the Gross order of vanishing conjectures can be obtained more generally, namely without the assumption $K \cap R_{\infty} = R$, by combining recent and deep results of Burns \cite{Bur18} with work of Kolster \cite{Kol91} (cf. Remark~\ref{rem:final}). We give here a more direct proof, the main ingredients of which are the Iwasawa main conjecture over totally real fields proved by Wiles (\cite{Wil90}), Brauer Induction and a careful analysis of the module $(B_{H_{\infty}})^{-}$ for  $H_\infty= R_\infty \cdot H$ for a suitable finite extension $H$ of $R$.

\paragraph{Examples with large vanishing order}
To the knowledge of the authors in almost all proven cases of $\GVC(K/R, \chi)$ the vanishing order of $L_{S, \chi}(s, \chi)$ at $s=0$ is $0$ or $1$, e.g. if $R=\Q$.

In view of Theorem~A, it should not be a surprise that also (almost, cf.~Remark~\ref{rem_GKC_known}) all known instances of the Gross-Kuz'min conjecture are cases where $K$ contains only one or two primes above $p$ or where $K / \Q$ is abelian. 
So evidence for these important conjectures for 'higher order of vanishing' is scarce. This is, in part, based on the fact that most of the proofs depend on the famous 'transcendence result' of Baker/Brumer, the necessary generalization of which is only conjectural (for work on the $p$-adic Schanuel conjecture in connection to the above conjectures see \cite{Kuz18} and \cite{Ems83}). 

We also use the Theorem of Brumer/Baker in our results. First, we derive the following result:
\begin{namedthm*}{Theorem~B}
For infinitely many primes and infinitely many non-abelian extensions $K/R$ of degree $8$, ${\GVC(K/R, \chi)}$ holds for all characters ${\chi \in \Ir^-(\Gal(K/R))}$. This includes examples with arbitrarily large vanishing order.
\end{namedthm*}
The main input here is the known validity of Leopoldt's conjecture and the equivalence to the Gross-Kuz'min conjecture in this specific setting.

In \cite{green-l-adic}, Greenberg used the Theorem of Brumer/Baker in order to prove the minus part of $\GKC$ for a CM-field $K$, provided that $K/\Q$ is abelian and that the prime $p$ is totally split in $K$ (moreover, he showed how to circumvent the latter condition in the abelian case). Using the approach of \cite{green-l-adic} and building on results from \cite{T-ranks} about the asymptotic growth of Galois coinvariants in a $\Z_p$-extension, we can more generally use Greenberg's approach in order to derive from the Theorem of Brumer/Baker the following 
 \begin{namedthm*}{Theorem~C}
  Let $K$ be a finite abelian CM-extension of a totally real number field $R$, and suppose that there exists some prime $\p$ of $R$ above $p$ which is totally split in $K$, and such that $R_\p = \Q_p$. 
 
 Then 
  	$$ \rg_{\Z_p}((A_{K_{\infty}}')^-_\Gamma) \le r - s, $$ 
  	where $r$ denotes the number of primes of the maximal totally real subfield $K^+$ of $K$ which split in $K$, and $[K:R] = 2s$. 
\end{namedthm*} 
We then proceed by explaining how one can derive from Theorem~C examples with large vanishing orders (see Corollary~\ref{cor_GVC} and Example~\ref{ex_largevanish}) where $\GVC(K/R, \chi)$ holds; the main idea is to enlarge the totally real field $R \subseteq K$.

In a subsequent paper the authors plan to describe how one can use this work in order to verify both conjectures numerically in many interesting new cases.

We want to thank Dominik Bullach for suggesting to think about this subject in the first place and many helpful conversations at the initial phase of this project. We are very grateful to Pascal Stucky for 
many illuminating discussions, in particular with the first author. Moreover, we want to thank Cornelius Greither for several insightful comments and suggestions.

\subsection{Notation} \label{section:notation} 

Unless stated otherwise, $p$ will always denote a rational prime. 

We denote by $\overline{K}$ a fixed algebraic closure of a field $K$ with characteristic $0$ and by $G_{K}$ the absolute Galois group. 

For a number field $K$ we denote by $S_{\infty}(K)$ the set of infinite places in $K$ and by $S_{p}(K)$ the set of places above $p$ in $K$. 
Let $S$ be a finite set of places of $K$ containing $S_{\infty}(K)$. Then we denote by $E^{S}_{K}$ the group of $S$-units of $K$ and by $\Cl_{S}(K)$ the $S$-class group of $K$. 
Moreover, we write $\Cl(K)=\Cl_{S_{\infty}(K)}(K)$ and $E_{K}^{\prime} = E_{K}^{S_p(K)}$ (i.e., the group of $p$-units of $K$) for brevity.  

Let $K/R$ be a finite Galois extension with Galois group $G$. We denote by $\Ir(G)$ resp. $\Ir_{p}(G)$ the set of irreducible characters of $G$ over $\C$ resp. over $\C_{p}$ and for each such character $\chi$ we fix a representation $V_{\chi}$ over $\C$ resp. over $\C_{p}$. For each such $\chi$ we write $\breve{\chi}$ for its contragredient.  For each $\chi \in \Ir_{p}(G)$, we denote by $\varepsilon_\chi = \frac{\chi(1)}{|G|} \cdot \sum_{\sigma \in G} \chi(\sigma) \sigma^{-1} $ the corresponding idempotent.

Now suppose that $K$ is a CM-field, and that $R$ is totally real. Then we write $K^{+}$ for the maximal real subfield of $K$ and $\tau$ for the (unique) non-trivial element of $\Gal(K/K^{+})$. We say that $\chi \in \Ir(G)$ is even if ${\chi(\tau)=  \chi(1)}$, and odd if $\chi(\tau)= - \chi(1)$. 

More generally, suppose that $\chi$ denotes a character of the absolute Galois group $G_R$ of $R$ with finite order. Letting $\ker(\chi) = \{g \in G_R \mid \chi(g) = \chi(1)\}$, we let $R_\chi \subseteq \overline{R}$ be the subfield fixed by $\ker(\chi)$. 
Then $\chi$ is called totally even if $R_\chi$ is totally real, and it is called totally odd if $R_\chi$ is CM. 

In the special setting of characters of $G = \Gal(K/R)$, $K$ a CM-Galois extension of $R$, the notions odd and totally odd (respectively, even and totally even) coincide, because $R_\chi \subseteq K$ for each character $\chi$ of $G$. 

We write $\Ir^{\pm}(G)$ and $\Ir_{p}^{\pm}(G)$ for the subsets of $\Ir(G)$ and $\Ir_{p}(G)$ that consist of the characters which are totally even or totally odd, respectively. For any $G$-module $M$ we define $M^{\pm} := \{ m \in M : \tau(m): \pm m\}$; these are two $G$-submodules of $M$. 

We denote by $K_{\infty}/K$ the cyclotomic $\Z_{p}$-extension of a number field $K$ and by $K_{n}$ the unique intermediate field of degree $p^n$ over $K$, $n \in \mathbb{N}_{0}$.

\section{Two conjectures named after Gross}

In this chapter we formulate the Gross-Kuz'min conjecture, the  Gross order of vanishing conjecture and characterwise variants of both. Moreover, we recall the cases where the conjectures are unconditionally known.

\subsection{The Gross-Kuz'min conjecture} \label{section:gross_kuzmin} 
Fix a prime $p$ and a number field $K$. For each $n \in \N$, let $A_n' = A_n/B_n$ be the quotient of $A_n = \textup{Cl}(K_n)_p$, the $p$-Sylow subgroup of $\Cl(K_{n})$, by the subgroup $B_n$ generated by the primes of $K_n$ dividing $p$, and let $A := \varprojlim A_n$ and $A' := \varprojlim A_n'$, where the projective limits are taken with respect to the norm maps.

\begin{conjecture}[Gross-Kuz'min conjecture] \label{conj:GKC} $\;$ \\ Let $K_\infty/K$ be the cyclotomic $\Z_p$-extension of a number field $K$. Then the module of coinvariants $\left(A'\right)_\Gamma$, where $\Gamma = \Gal(K_\infty/K)$, is finite.
\end{conjecture}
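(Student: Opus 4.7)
The plan is to combine Iwasawa-theoretic reductions with $p$-adic transcendence input. Since $A'$ is a finitely generated torsion module over $\Lambda := \Z_p[[\Gamma]] \cong \Z_p[[T]]$, the coinvariants $(A')_\Gamma$ are finite if and only if the characteristic polynomial $f_{A'}(T)$ of $A'$ is not divisible by $T$. So the task is to exclude the appearance of a trivial zero of $f_{A'}$.

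First, by class field theory, $A$ corresponds to the Galois group of the maximal abelian unramified pro-$p$ extension of $K_\infty$, and $A'$ to the maximal such extension in which every prime above $p$ splits completely. Using the fundamental exact sequence linking $A$, $A'$ and the decomposition subgroups at primes above $p$, one can recast the finiteness of $(A')_\Gamma$ as the assertion that a certain natural map from $E_K^{\prime} \otimes \Z_p$ into a direct sum of local factors at the primes of $K$ above $p$ has the expected $\Z_p$-rank. In other words, the problem is pushed from the infinite tower $K_\infty/K$ back down to a finite-level assertion about the base field $K$.

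Second, the local reciprocity maps and Kummer theory translate this rank statement into a question about the $p$-adic linear independence of the $p$-adic logarithms $\log_p(\varepsilon)$ of $p$-units $\varepsilon \in E_K^{\prime}$: concretely, the $\Z_p$-rank of $(A')_\Gamma$ measures precisely the number of independent $p$-adic linear relations among these logarithms after imposing the constraints coming from decomposition at $p$. In particular, Conjecture~\ref{conj:GKC} would follow from the non-vanishing of an explicit $p$-adic regulator built from logarithms of $p$-units.

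The main obstacle is exactly this final transcendence step. When $K/\Q$ is abelian, Greenberg \cite{green-l-adic} showed that the Brumer/Baker theorem on $p$-adic linear independence of logarithms of algebraic numbers delivers precisely what is required, and this is the input that will be exploited (via the equivalence of Theorem~A) in Theorems~B and~C of this paper. In the general case, however, the required independence is a special case of a $p$-adic Schanuel-type conjecture and is beyond current transcendence technology; consequently I would not expect an unconditional proof of the full conjecture along these lines. My realistic proposal is therefore to establish the conjecture in settings where a Brumer/Baker reduction is still available — for instance, abelian base fields, or configurations with a controlled totally split prime above $p$ — which is the strategy underlying the present work.
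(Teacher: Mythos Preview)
Your proposal is appropriate because the statement in question is a \emph{conjecture}, not a theorem: the paper does not contain (and does not claim to contain) a proof of Conjecture~\ref{conj:GKC} in general. You correctly recognise this, correctly identify the reduction to a $p$-adic regulator/transcendence statement, and correctly note that Brumer/Baker suffices in the abelian case while the general case would require input of Schanuel type; this matches the paper's own discussion (Remarks~\ref{rem_kolster}, \ref{rem_GKC_known}) and the strategy behind Theorems~B and~C.
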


In what follows, we write $\GKC(K)$ for ``the Gross-Kuz'min conjecture for the cyclotomic $\Z_p$-extension $K_\infty/K$''.
\begin{rem} \label{rem:gross-kuzmin} 
    Let $L/K$ be a finite extension of number fields, let $K_\infty/K$ be the cyclotomic $\Z_p$-extension, and let $L_\infty = L \cdot K_\infty$.  If $\GKC(L)$ holds, then also $\GKC(K)$ is true. 
\end{rem} 
 	
\paragraph{The minus Gross-Kuz'min conjecture} 
If $K$ is a CM-field and $p \ne 2$, then we consider minus parts ${(A')^- = \varprojlim_n (A_n')^-}$. The following conjecture is then a part of Conjecture~\ref{conj:GKC}: 

\begin{conjecture}[Minus Gross-Kuz'min conjecture] \label{conj:GKC-minus} $\;$ \\ 
  With the above notation, $((A')^-)_\Gamma$ is finite. 
\end{conjecture}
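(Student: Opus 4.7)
The plan is to interpret $((A')^-)_\Gamma$ via class field theory and then reduce finiteness to a $p$-adic transcendence statement. First, I would identify $A' = \varprojlim_n A'_n$ with the Galois group of the maximal unramified abelian pro-$p$ extension of $K_\infty$ in which all primes above $p$ split completely; by class field theory this is the natural Iwasawa-theoretic dual to the $p$-units. Taking $\Gamma$-coinvariants, a snake-lemma argument along the tower $(K_n)_n$ using norm maps yields an exact sequence that controls $(A')_\Gamma$ by the finite piece $A'_K$ on one side and by a $p$-adic cokernel on the other. The latter cokernel is essentially the cokernel of the semi-local logarithm map
\[
 \log_p \colon E'_K \otimes_{\Z} \Z_p \longrightarrow \prod_{\frp \mid p} U^{(1)}_{K_{\frp}},
\]
interpreted through the decomposition behavior of primes above $p$ in $K_\infty / K$.

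Next, I would take minus parts. Since $p \neq 2$, complex conjugation $\tau$ induces a splitting into $\pm$-eigenspaces which is preserved by all maps in sight. The plus-part analogue of the problem is tightly linked to Leopoldt's conjecture for the totally real field $K^+$, whereas the minus part reduces to the assertion that the minus component of the above semi-local logarithm map on $p$-units is injective modulo $\Z_p$-torsion. In other words, finiteness of $((A')^-)_\Gamma$ translates into a statement of $p$-adic linear independence of the logarithms of a system of generators of $(E'_K \otimes \Z_p)^-$, which is exactly the kind of assertion that the conjecture is classically expected to reduce to.

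To establish this linear independence, the natural input is the Brumer-Baker theorem, which gives $p$-adic linear independence over $\overline{\Q}$ of the $p$-adic logarithms of algebraic numbers whose complex logarithms are linearly independent over $\Q$. This is precisely Greenberg's strategy in the abelian case over $\Q$, and my plan is to lift it to the present setting by a character-by-character analysis: for each totally odd $\chi$, one picks out the $\varepsilon_{\check\chi}$-component of $E'_K \otimes \overline{\Q}_p$ and applies Brumer-Baker to generators defined over a suitable abelian subfield.

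The main obstacle is the transcendence input. Brumer-Baker only provides independence over $\overline{\Q}$, whereas the character decomposition in a non-abelian setting genuinely requires independence over $\C_p$; bridging this gap demands a $p$-adic Schanuel-type statement that is wide open. Accordingly, I expect this strategy to work unconditionally only when the relevant $\chi$-eigenspaces are defined over abelian fields (so that the Brumer-Baker input suffices after Galois descent), which matches the setting in which the paper actually establishes the conjecture, e.g.\ in Theorem~C.
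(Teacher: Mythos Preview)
The statement you are addressing is a \emph{conjecture}, not a theorem: the paper does not claim or provide a proof of the Minus Gross-Kuz'min conjecture in general. It states Conjecture~\ref{conj:GKC-minus} as open, proves its equivalence with $\GVC$ (Theorem~A), and then establishes it only in special families (Theorems~B and~C). So there is no ``paper's own proof'' to compare against, and your proposal cannot be a complete proof either --- which you correctly acknowledge in your final paragraph.

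That said, your outline is a faithful sketch of the classical reduction. The identification of $A'$ with the Galois group of the split-at-$p$ unramified extension, the control of coinvariants via the semi-local logarithm on $p$-units, and the passage to minus parts are all standard and correct. You also pinpoint the genuine obstruction: Brumer--Baker gives $\overline{\Q}$-linear independence of $p$-adic logarithms, but the character decomposition over a non-abelian $G$ (or even abelian $G$ when the local completions are not $\Q_p$) would require independence over larger coefficient fields, i.e.\ a $p$-adic Schanuel-type input that is unavailable.

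Where the paper's Theorem~C goes beyond your sketch is in isolating a concrete hypothesis under which Brumer--Baker \emph{does} suffice: one prime $\p$ of $R$ above $p$ is totally split in $K$ with $R_\p = \Q_p$. This forces the relevant local embeddings to land in $\Q_p$, so the logarithms lie in $\Q_p$ and the coefficients $\chi(\sigma_i^{-1})$ are algebraic; Brumer--Baker then applies directly (Claim~\ref{claim_2}). The paper also does not attempt the full finiteness statement under this hypothesis, only the rank bound $\rg_{\Z_p}((A')^-_\Gamma) \le r - s$, obtained via a Chevalley-type genus formula (Lemma~\ref{lemma:chevalley-}) combined with asymptotic rank-growth results from \cite{T-ranks}, rather than via the snake-lemma exact sequence you propose. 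Your approach and the paper's are compatible in spirit but differ in the bookkeeping: you aim at the cokernel of $\log_p$ directly, while the paper tracks norm indices layer by layer.
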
 
We write $\GKC^-(K)$ for ``the minus Gross-Kuz'min conjecture for the cyclotomic $\Z_p$-extension $K_\infty/K$''. 

Note: Conjecture~\ref{conj:GKC-minus} is true if and only if 
\[ ((A')^-)_\Gamma \otimes_{\Z_p} \overline{\Q}_p = \{0\}. \] 
One can further reformulate this conjecture: 
$V_{A'}^- := (A')^- \otimes_{\Z_p} \overline{\Q}_p$ is a finitely generated $\overline{\Q}_p$-vector space. 
Let ${\gamma}$ be a topological generator of $\Gamma$. If ${f_{A'}^-(T) \in \overline{\Q}_p[T]}$ denotes the characteristic polynomial of the linear map induced by the action of $T = \gamma - 1$ on $V_{A'}^-$, then Conjecture \ref{conj:GKC-minus} holds if and only if $f_{A'}^-(T)$ is not divisible by $T$, i.e. it has nontrivial constant coefficient. For later use, we also introduce the characteristic polynomial $f_A^-(T)$ of $T$ on the vector space $V_A^- = A^- \otimes_{\Z_p} \overline{\Q}_p$, where $A^- = \varprojlim A_n
^-$. 

\paragraph{Character-wise minus Gross-Kuz'min conjecture} 
Let $K$ be a CM-field, ${p \ne 2}$, and let $R \subseteq K$ be a totally real subfield such that $K/R$ is normal. We write $G = \Gal(K/R)$. Recall the notation $R_\infty$ from the Introduction. In the following, we assume that $K \cap R_\infty = R$. 

Let $V_{A'} = A' \otimes_{\Z_p} \overline{\Q}_p$, $\chi \in \Ir(G)$,  $V_{A'}^{(\chi)} := \varepsilon_\chi \cdot V_{A'}$, and let ${f_{A',\chi}(T) \in \overline{\Q}_p[T]}$ be the characteristic polynomial of $T = \gamma - 1$ on $V_{A'}^{(\chi)}$. 
\begin{conjecture}
\label{conj:GKC-chi} 
  The characteristic polynomial $f_{A',\chi}(T)$ is not divisible by $T$. 
\end{conjecture}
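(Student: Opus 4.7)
My plan is to establish Conjecture~\ref{conj:GKC-chi} by first reducing it, character by character, to the minus Gross-Kuz'min conjecture, and then, for a specific $\chi$, appealing to the equivalence between $\GKC(K/R,\chi)$ and $\GVC(K/R,\chi)$ announced in Theorem~A. The hypothesis $K \cap R_\infty = R$ is crucial at the outset: it guarantees that $\Gal(K_\infty/R) \cong G \times \Gamma$, so that the natural $G$-action on $A'$ commutes with the $\Gamma$-action and makes $V_{A'}$ a genuine $\overline{\Q}_p[G \times \Gamma]$-module.

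Concretely, I would first decompose $V_{A'} = A' \otimes_{\Z_p} \overline{\Q}_p$ as a $\overline{\Q}_p[G]$-module. Since $\overline{\Q}_p[G]$ is semisimple and the idempotents $\varepsilon_\chi$ commute with $T = \gamma - 1$, one obtains $V_{A'} = \bigoplus_{\chi \in \Ir_p(G)} V_{A'}^{(\chi)}$ with each summand stable under $T$. Because $\tau$ is central in $G$ and of order two, Schur's lemma forces $\tau$ to act on $V_{A'}^{(\chi)}$ as the scalar $\chi(\tau)/\chi(1) = \pm 1$, so this decomposition refines the splitting $V_{A'} = V_{A'}^+ \oplus V_{A'}^-$ and identifies $V_{A'}^-$ with $\bigoplus_{\chi \in \Ir_p^-(G)} V_{A'}^{(\chi)}$. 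Taking characteristic polynomials gives $f_{A'}^-(T) = \prod_{\chi \in \Ir_p^-(G)} f_{A',\chi}(T)$, so that Conjecture~\ref{conj:GKC-chi}, taken over all totally odd $\chi$, is equivalent to Conjecture~\ref{conj:GKC-minus}. To isolate a single character, Theorem~A then lets one replace $\GKC(K/R,\chi)$ by the corresponding statement about vanishing orders of $p$-adic and complex $L$-functions, which opens the door to attacking the conjecture analytically one character at a time.

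The main obstacle I anticipate is precisely this last step, namely proving non-divisibility of $f_{A',\chi}(T)$ by $T$ for a fixed $\chi$: the reduction to $\GVC(K/R,\chi)$ only rephrases the problem, and genuine arithmetic input is needed, typically in the shape of the Brumer/Baker transcendence result or analytic control on $\ord_{s=0} L_S(s,\chi)$. Since the vanishing order can be arbitrarily large, what is really at stake is matching exact orders, not merely a one-sided inequality of the kind obtained unconditionally in \cite{Bur18}; this is where the specialized constructions behind Theorems~B and~C must enter.
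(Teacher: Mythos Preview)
The statement you are trying to prove is a \emph{conjecture}, not a theorem: the paper does not claim, let alone prove, that $f_{A',\chi}(T)$ is coprime to $T$ for an arbitrary CM-extension $K/R$ and arbitrary $\chi \in \Ir^-(G)$. There is therefore no ``paper's own proof'' to compare your proposal against. What the paper does is (i) show that $\GKC(K/R,\chi)$ is equivalent to $\GVC(K/R,\chi)$ under the hypothesis $K \cap R_\infty = R$ (Theorem~A), and (ii) verify $\GKC(K/R,\chi)$ in carefully constructed special families (Theorems~B and~C) where Leopoldt's conjecture or the Brumer/Baker theorem supplies the missing transcendence input.

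Your proposal is not a proof but a correct summary of this architecture. The decomposition $V_{A'}^- = \bigoplus_{\chi \in \Ir_p^-(G)} V_{A'}^{(\chi)}$ and the resulting factorisation $f_{A'}^-(T) = \prod_\chi f_{A',\chi}(T)$ are exactly the content of Remark~\ref{rem:chi-components}; the passage to $\GVC$ via Theorem~A is the paper's Section~\ref{section:equivalence}. You yourself acknowledge in your final paragraph that this only \emph{rephrases} the problem and that ``genuine arithmetic input is needed''. That is precisely the point: the conjecture remains open in general, and no amount of reduction to $\GVC$ will close it without such input. So there is no gap in your reasoning as far as it goes, but you should not present this as a proof of Conjecture~\ref{conj:GKC-chi}; it is a roadmap, and one that the paper itself follows only in the restricted settings of Theorems~B and~C.
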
 
In the following, we will write $\GKC(K/R, \chi)$ for ``the characteristic polynomial $f_{A',\chi}(T)$ is not divisible by $T$''.

\begin{rem} \label{rem:chi-components} 
  Since $(A')^- \otimes_{\Z_p} \overline{\Q}_p = \bigoplus_{\chi \in \Ir^-(G)} V_{A'}^{(\chi)}$, Conjecture~\ref{conj:GKC-minus} is equivalent to the validity of Conjecture~\ref{conj:GKC-chi} for all $\chi \in \Ir^-(G)$. 
\end{rem}

The next comment will explain why Gross's name appears in the conjecture.

\begin{rem} \label{rem_kolster} $\;$ \\ \vspace{-4mm} 
  \begin{compactenum}[(1)] 
  \item There exists a different formulation of Conjecture~\ref{conj:GKC} which dates back to Jaulent's thesis \cite{Jau86}. It asserts that for a number field $K$ a certain invariant, called ``Gross-defect'' $\delta_{K}$, is zero. Indeed, \cite[Theorem~1.14]{Kol91} shows that $\delta_{K}=0$ if and only if $(A_n')^{\Gamma}$ is bounded independently of $n$, which in turn holds if and only if $(A')_{\Gamma}$ is finite.
  \item B. Gross's name appears in the conjecture because Conjecture~1.15 in \cite{Gro81} is equivalent to the Gross defect of \cite{Kol91} being zero if one considers CM-fields $K$ and minus-parts of the objects involved.
 \end{compactenum}
 \end{rem}

We now give a list of all (unconditional) results on the Gross-Kuz'min conjecture holds (as far as the authors are aware).
\begin{rem} $\;$ \\ \vspace{-4mm} \begin{compactenum}[(1)] 
\label{rem_GKC_known} 
\item Let $K$ be any number field. If $K$ contains exactly one prime above $p$, then $\GKC(K)$ follows from Chevalley's Theorem (see \cite[Lemma 4.1 in Chapter 13]{lang}); if $K$ contains two such primes then $\GKC(K)$ is known due to \cite[Theorem 3.4]{T-ranks}.
\item Let $K$ be a number field such that $K$ is normal over $\Q$, $\Gal(K/\Q) \cong S_{4}$, K contains an imaginary quadratic subfield, and $p$ is such that the decomposition group of some prime of $K$ dividing $p$ is isomorphic to a 3-Sylow subgroup of $\Gal(K / \Q)$. Then \cite[Corollary 4]{Kuz18} shows that $\GKC(K)$ holds.
\item Let $K/\Q$ be a normal extension, $G := \Gal(K/\Q)$, and let $p$ be a prime number such that the decomposition field $F_p = K^{D_p}$ of $p$ is normal over $\Q$ and totally real, and such that in the Artin-Wedderburn decomposition of the group ring $\Q_p[G/D_p]$ only matrix rings $M_n(R)$ with $n \le 2$ occur. Then $\GKC(K)$ holds for this prime $p$ (\cite{Jau02}). 
\item Greenberg proves $\GKC(K)$ for any abelian extension $K$ of $\Q$ in \cite{green-l-adic}.
\item Let $K$ be a CM-field. Then $\GKC^-(K)$ holds if $K$ contains a CM-subfield $k$ such that the prime $p$ does not split in $K/k$ and $\GKC^-(k)$ holds (see \cite[Section~3]{green-l-adic} and \cite[Scolie~10]{Jau17}). 
\end{compactenum}
\end{rem}

\subsection{The order of vanishing conjecture of Gross} \label{section:Gross_vanishing}
\label{subsec_GVC}
Now we describe the order of vanishing conjecture originally formulated as Conjecture~2.12 a) in \cite{Gro81}; we use the notation of \cite{Bur18}.

Let $K$ be a finite CM Galois extension of a totally real number field $R$. We fix a finite set $S$ of places in $R$ which contains $S_{\infty}(R)$ and $S_{p}(R)$. Let $\chi \in \Ir(G)$. Then we denote by $L_{S}(s, \chi)$ the $S$-truncated Artin $L$-function of $\chi$. By \cite[Proposition~3.4]{Tat84} we have
\begin{align}
\label{eqn_order_ArtinLseries}
r_{S, \chi}:=\ord_{s=0} L_{S}(s, \chi)= \dim_{\C}(\Hom_{\C[G]}(V_{\breve{\chi}}, \C \otimes_{\Z} Y_{K, S})) , 
\end{align} 
where $Y_{K,S}$ denotes the free abelian group on $S$. 

\begin{rem}
Equation (\ref{eqn_order_ArtinLseries}) implies that $r_{S, \chi}=r_{S, \chi^{\alpha}}$ for all automorphisms $\alpha$ of $\C$. If $\psi \in \Ir_{p}(G)$, then we can therefore set $r_{S, \psi}:=r_{S, \chi}$, where $\chi$ is \emph{any} character in $\Ir(G)$ with $\chi^{j}=\psi$ for some field isomorphism $j\colon \C \cong \C_{p}$. In what follows, we will usually identify a $p$-adic character $\psi$ with some fixed $\chi \in \textup{Ir}(G)$. 
\end{rem}

For each character $\chi$ in $\Ir^{+}_{p}(G)$ we write $L_{p, S}(s,\chi)$ for the $S$-truncated Deligne-Ribet $L$-function (cf. \cite{DeRi80}) as discussed by Greenberg in \cite{Gre83}. 
We write $\omega_R$  for the Teichmüller character $\omega_{R}\colon G_{R} \longrightarrow \Z_{p}^{\times}$. Recall that for $\chi \in \Ir^{-}_{p}(G)$ the character $\chi \omega_{R}$ is totally even.

\begin{conjecture}{(Gross order of vanishing conjecture)}\\
\label{conj_ordervanishconj_Gross}
For each $\chi \in \Ir^{-}_{p}(G)$ we have
\[
\ord_{s=0} L_{p, S}(s,\breve{\chi} \omega_{R}) = \ord_{s=0} L_{S}(s, \chi), 
\]
where we again identify $\chi$ with a character in $\Ir^-(G)$ by using some fixed field isomorphism $j\colon \C \cong \C_{p}$.
\end{conjecture}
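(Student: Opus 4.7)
The plan is to convert the conjectured equality of orders of vanishing into a single statement about the characteristic polynomial $f_{A',\chi}(T)$, and then to invoke the cases where the Gross-Kuz'min conjecture $\GKC(K/R,\chi)$ is known. Concretely, I would compute each side separately, show that the two differ by exactly $\ord_{T=0} f_{A',\chi}(T)$, and conclude that the conjecture is equivalent to $\GKC(K/R,\chi)$.

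For the complex side, formula~(\ref{eqn_order_ArtinLseries}) gives $r_{S,\chi} = \dim_{\C}\Hom_{\C[G]}(V_{\breve\chi}, \C \otimes_{\Z} Y_{K,S})$. Since $\chi$ is totally odd, the idempotent $\varepsilon_{\breve\chi}$ annihilates the $\tau$-fixed lattice $Y_{K, S_\infty}$, so only places above $p$ contribute. By Frobenius reciprocity,
\[
r_{S,\chi} \;=\; \dim_{\C}\Hom_{\C[G]}\!\bigl(V_{\breve\chi},\, \C \otimes_{\Z} Y_{K, S_p}\bigr) \;=\; \sum_{\p \in S_p(R)} \dim_{\C} V_{\breve\chi}^{G_\p},
\]
where $G_\p \subseteq G$ denotes a decomposition subgroup at a prime of $K$ above $\p$.

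For the $p$-adic side, I would apply the Iwasawa Main Conjecture over the totally real field $R$ (Wiles, \cite{Wil90}). Since $\psi := \breve\chi\omega_R$ is totally even, the Main Conjecture identifies $L_{p,S}(s,\psi)$, under the usual substitution $u^s - 1 \leftrightarrow T$ (where $u = \chi_{\mathrm{cyc}}(\gamma)$ for a topological generator $\gamma$ of $\Gamma$), with a generator of the characteristic ideal of the $\psi$-isotypic component of $X_\infty := \Gal(M_\infty/K_\infty)$, the Galois group of the maximal abelian pro-$p$ extension of $K_\infty$ unramified outside $p$. A Kummer-style duality (the same one that powers Theorem~A of this paper) relates $X_\infty^{(\psi)}$ to $(A_{K_\infty}')^{(\chi)}$, with a boundary contribution from the decomposition subgroups of the primes above $p$ in $K_\infty$. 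After carefully tracking this contribution, I expect the identity
\[
\ord_{s=0} L_{p,S}(s, \breve\chi\omega_R) \;=\; \sum_{\p \in S_p(R)} \dim_{\C} V_{\breve\chi}^{G_\p} \;+\; \ord_{T=0} f_{A',\chi}(T) \;=\; r_{S,\chi} + \ord_{T=0} f_{A',\chi}(T).
\]
The first summand represents the trivial zero contribution from the removed Euler factors (matching the inequality ``$\ge$'' of Burns \cite{Bur18}), while the second is the potential extra vanishing. The conjecture is therefore equivalent to $\ord_{T=0} f_{A',\chi}(T) = 0$, which is precisely $\GKC(K/R,\chi)$.

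Consequently the conjecture holds unconditionally in every case where $\GKC(K/R,\chi)$ is known: for abelian extensions $K/\Q$ by Greenberg \cite{green-l-adic}; in the situations enumerated in Remark~\ref{rem_GKC_known}; and in the new families produced by Theorems~B and~C of this paper. The main obstacle is that $\GKC$ itself is open: all existing unconditional proofs ultimately rely on the Baker/Brumer transcendence theorem, whose necessary extension (the $p$-adic Schanuel conjecture) remains inaccessible, or else on very restrictive splitting hypotheses on $p$ that force the module $(A_{K_\infty}')^{(\chi)}$ to be tractable. Any genuinely unconditional proof of the full conjecture would therefore require either a new transcendence input or a substantially new technique for controlling the Iwasawa module $(A_{K_\infty}')^{(\chi)}$ at $T=0$.
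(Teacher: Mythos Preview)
The statement you were given is Conjecture~\ref{conj_ordervanishconj_Gross}, which is \emph{open}; the paper does not claim to prove it. What you have written is not a proof of the conjecture but a reduction to $\GKC(K/R,\chi)$ together with a list of cases where that is known --- as you yourself acknowledge in the final paragraph. This is the content of Theorem~A and Section~\ref{section:examples}, not a proof of the conjecture itself, so as a ``proof proposal'' for the stated conjecture there is a genuine gap: the argument terminates at an equivalent open problem.

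Read instead as a sketch of Theorem~A, your outline is in the right spirit but diverges from the paper's argument in several technical respects. First, you omit the standing hypothesis $K \cap R_\infty = R$, which the paper needs in order to split $\Gal(H_\infty/R) \cong \Gamma \times \Delta$ and to make sense of the $\chi$-components as $\Lambda$-modules. Second, the paper does not work over $K_\infty$ directly: it passes to $H_\infty$ with $H = R_\chi(\mu_p)$ (via the reduction Lemma~\ref{lemma:equivalences}), because the Main Conjecture it invokes (Theorem~\ref{thm_IMCarbdeg}) relates $L_{p,S}(s,\breve\chi\omega_R)$ to the characteristic polynomial of the \emph{unramified} Iwasawa module $X \cong \varprojlim A_n$ over $H_\infty$, not to the $p$-ramified module $X_\infty$ you introduce. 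Third, in place of your unspecified ``Kummer-style duality'', the paper uses the concrete exact sequence $0 \to B^- \to A^- \to (A')^- \to 0$ from Proposition~\ref{prop_FeGr81_Prop6.1} and computes $\varepsilon_\chi B_v$ explicitly (Lemma~\ref{lemma:3.4}) to identify the trivial-zero contribution; this is where the quantity $\sum_{v \in S_p(R)} \dim V_\chi^{G_w}$ actually enters. Finally, note that the identity in the paper carries a factor $\chi(1)$: one has $\chi(1)\cdot\ord_{s=0}L_{p,S}(s,\breve\chi\omega_R) = \ord_{T=0} f_{A',\chi}(T) + \chi(1)\cdot r_{S,\chi}$, so your displayed formula is correct only for one-dimensional $\chi$.
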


\begin{rem} \label{rem:gvc} $\;$ \\ \vspace{-4mm} 
  \begin{compactenum}[(1)] 
  \item In the classical formulation of the conjecture, $S = S_\infty(R) \cup S_p(R)$.  We will write $\GVC(K/R, \chi)$ for ``the Gross order of vanishing conjecture for the extension $K/R$ and the character $\chi \in \Ir_{p}^{-}(\Gal(K/R))$'' in this classical setting. 
  \item Let $R$ be a totally real field, and let ${\chi: G_{R} \longrightarrow \overline{\Q}^{\times}}$ be a totally odd finite (one-dimensional!) character. We let $K=\overline{R}^{\ker(\chi)}$ (which is a CM-field) and ${G:= \Gal(K/R)}$.  Then \cite{ChDa14} and \cite{Spi14} independently show that for each ${\chi \in \Ir^{-}_{p}(G)}$ we have
\[
\ord_{s=0} L_{p, S}(s,\breve{\chi} \omega_{R}) \geq \ord_{s=0} L_{S}(s, \chi). 
\] 
  \item \cite[Theorem~3.1~i]{Bur18}, which is proved via a different approach, implies that more generally for arbitrary finite CM-extensions $K$ of $R$ and each $\chi \in \Ir^{-}_{p}(G)$ we have
\[
\ord_{s=0} L_{p, S}(s,\breve{\chi} \omega_{R}) \geq \ord_{s=0} L_{S}(s, \chi).
\]
 \item   By using Brauer Induction one obtains: Conjecture~\ref{conj_ordervanishconj_Gross} is valid if and only if it is valid for all $L$-functions of the form $L_{p, S_{M}}(s,\breve{\phi} \omega_{M})$, where $M$ is any  totally real intermediate field of $K/R$ and $\phi$ is a one-dimensional character in $\Ir_{p}^{-}(\Gal(K/M))$. This is \cite[Remark~2.2]{Bur18}.
\end{compactenum}
\end{rem}

Now we give a list of all (unconditional) results on  Conjecture~\ref{conj_ordervanishconj_Gross} (to the knowledge of the authors). 
\begin{rem} $\;$ \\ \vspace{-4mm} \label{rem:known_GV}\begin{compactenum}[(1)] 
  \item If $r_{S, \chi}=0$, then $\GVC(K/R, \chi)$ holds for $\chi \in \Ir^{-}(G)$ by \cite[p.~989]{Gro81}. 
    \item In \cite{Ems83} $\GVC(K / \Q, \chi)$ is shown for $K / \Q$ abelian and all $\chi \in \Ir^-(G)$.
  \item If $r_{S, \chi}=1$, then $\GVC(K/R, \chi)$ holds for $\chi \in \Ir^{-}(G)$ by \cite[Prop.~2.13]{Gro81} and a result of \cite{Bur18}, as explained in Remark~\ref{rem_final} (2).
\end{compactenum}
\end{rem}

\section{The equivalence of the conjectures} \label{section:equivalence} 

In this chapter we prove Theorem~A from the Introduction, which establishes the equivalence between our characterwise versions of the Gross-Kuz'min conjecture and the order of vanishing conjecture of Gross under a standard hypothesis.

Using the notation from Section \ref{section:Gross_vanishing}, recall that $K$ is a finite Galois CM-extension of a totally real number field $R$, $p$ is an odd prime and $G = \Gal(K/R)$. For any fixed character ${\chi \in \textup{Ir}_p^-(G)}$, we let (by abuse of notation) $\chi \in \textup{Ir}^-(G)$ be a character which corresponds to $\chi$ under some fixed isomorphism $j \colon \C \cong \C_p$, and we define ${R_\chi := \overline{R}^{\ker(\chi)}}$, where $\ker(\chi) = \{ g \in G \mid \chi(g) = \chi(1)\}$. Then $R_\chi \subseteq K$, and $R_\chi$ is a CM-field, since $\chi$ is a totally odd character. We let $H := R_\chi(\mu_p)$ and $H_\infty := H \cdot R_\infty$, where $\mu_p$ denotes the group of $p$-th roots of unity and $R_\infty$ denotes the cyclotomic $\Z_p$-extension of $R$. Note that $H$ is also a CM-field. Finally, we define $\Gamma := \Gal(H_\infty/H)$ and we identify $\Lambda := \Z_p\llbracket \Gamma \rrbracket$ with the ring of power series $\Z_p\llbracket T \rrbracket$. 

\begin{namedthm*}{Theorem~A} \label{thm:equivalence}
Let $K$ be a finite Galois CM-extension of a totally real field $R$ with Galois group $G$, let $p$ be an odd prime and $\chi \in \Ir^{-}_{p}(G)$. Assume that 
\begin{align} \label{eq:bedingung} K \cap R_\infty = R. \end{align} 
Then $\GKC(K/R,\chi)$ holds if and only if $\GVC(K/R, \chi)$ holds.
\end{namedthm*} 

In view of Remark~\ref{rem:chi-components}, we derive the following  
\begin{cor} \label{cor:equivalence} 
  Let $K$ be a CM-field, and let $p \ne 2$. Then $\GKC^-(K)$ holds if and only if for \emph{any} totally real subfield $R$ of $K$ satisfying ${K \cap R_\infty = R}$, $\GVC(K/R, \chi)$ holds for each character $\chi \in \Ir_p^-(\Gal(K/R))$. 
\end{cor}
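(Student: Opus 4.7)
The plan is to derive the corollary directly from Theorem~A and Remark~\ref{rem:chi-components}, which between them already do all the real work.

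First, I would check that the family of admissible subfields appearing in the statement is non-empty, so that the ``for any'' clause is non-vacuous. The natural choice is $R = K^+$: the extension $K/K^+$ is Galois of degree $2$, and since $p$ is odd the cyclotomic $\Z_p$-extension $K^+_\infty/K^+$ remains totally real. Therefore $K \cap K^+_\infty$ is a totally real subfield of $K$, hence contained in $K^+$, which forces $K \cap K^+_\infty = K^+$. I would also note that the corollary implicitly assumes $K/R$ is Galois, since this is required in order to speak of $\Gal(K/R)$ and its characters, as well as to invoke Theorem~A and Remark~\ref{rem:chi-components}.

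For the forward implication, I would fix any admissible $R$ and set $G = \Gal(K/R)$. Remark~\ref{rem:chi-components} gives the decomposition
\[
(A')^- \otimes_{\Z_p} \overline{\Q}_p \;=\; \bigoplus_{\chi \in \Ir^-(G)} V_{A'}^{(\chi)},
\]
which shows that $\GKC^-(K)$ is equivalent to the simultaneous validity of $\GKC(K/R,\chi)$ for every $\chi \in \Ir^-(G)$. Applying Theorem~A character by character then converts each such $\GKC(K/R,\chi)$ into the corresponding $\GVC(K/R,\chi)$, which is the desired conclusion.

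For the reverse implication, I would apply the hypothesis to the specific admissible subfield $R = K^+$ (or any other admissible $R$). Theorem~A turns each assumed $\GVC(K/R,\chi)$ back into $\GKC(K/R,\chi)$, and Remark~\ref{rem:chi-components} reassembles these characterwise statements into $\GKC^-(K)$. Since the two ingredients are already in hand, I do not anticipate any serious obstacle; the only small point requiring care is the non-vacuousness and Galois check above, which is handled by the $R=K^+$ observation.
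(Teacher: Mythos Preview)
Your proposal is correct and follows exactly the route the paper intends: the paper does not give a separate proof of the corollary at all, but simply introduces it with the phrase ``In view of Remark~\ref{rem:chi-components}, we derive the following'', relying on Theorem~A for the characterwise equivalence and on Remark~\ref{rem:chi-components} for the passage between $\GKC^-(K)$ and the conjunction of the $\GKC(K/R,\chi)$. Your write-up spells out precisely these two ingredients, and your additional non-vacuousness check via $R=K^+$ (together with the implicit Galois hypothesis) is a sensible clarification that the paper leaves tacit.
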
 

\begin{proof}[Proof of Theorem~A] 
	The proof splits into two parts, the first of which is purely algebraic. The second part involves as main ingredient the Iwasawa main conjecture. 
	
	Fix some $\chi$, and let $R_\chi$, $H$ and $H_\infty$ be defined as above. Let $L_{\infty}$ be the maximal unramified abelian pro-$p$-extension of $H_{\infty}$. Then ${X = \Gal({L_\infty}/{H_\infty})}$ is a $\Gal({H_\infty}/R)$-module under the usual conjugation action. 
	Note that, via class field theory, we have a $\Gal({H_\infty}/R)$-isomorphism 
	\[ X =  \Gal({L_\infty}/{H_\infty})  \cong \varprojlim A_n, \]
	where the limit is taken using the norm maps $A_{n + 1} \longrightarrow A_n$ (here  $A_n=\mathrm{Cl}(H_{n})_{p}$, where $H_{n}$ is the $n$-th layer of the $\Z_p$-extension $H_\infty/H$). 

Recall from Section \ref{section:gross_kuzmin} the definitions of $B = \varprojlim B_n$, $A_n' = A_n/B_n$ and ${A' = \varprojlim A_n'}$. If $L_\infty'$ denotes the maximal subextension of $L_\infty/H_\infty$ in which all the primes above $p$ are totally split, then class field theory induces isomorphisms ${B \cong \Gal(L_\infty/L_\infty')}$ and ${A' \cong \Gal(L_\infty'/H_\infty)}$. 

Analogously, Iwasawa modules can be defined for the cyclotomic $\Z_p$-extension $M_\infty$ of any number field $M$. In order to distinguish the corresponding Iwasawa modules, we will write $A^{(M_\infty)}$, $(A')^{(M_\infty)}$, etc. 

As a first step, we will show that $\GVC(K/R, \chi)$ holds if and only if $\GVC(H/R, \chi)$ holds (our motivation for proving this reduction is that the Iwasawa main conjecture, which will be used in the second part of the proof, is formulated for the $\Z_p$-extension $H_\infty/H$ rather than for $K_\infty/K$). 

\begin{lemma} \label{lemma:equivalences} 
   Let $\chi \in \Ir^-(G)$, and let $M$ be any finite normal CM-extension of $R$ containing $R_\chi$. We assume that $M \cap R_\infty = R$. 
   Then $\GKC(M/R, \chi)$ holds if and only if $\GKC(R_\chi/R, \chi)$ holds. 
   
   In particular, then the following conjectures are equivalent: \begin{compactenum}[(a)] 
       \item $\GKC(K/R, \chi)$, 
       \item $\GKC(R_\chi/R,\chi)$, 
       \item $\GKC(H/R, \chi)$. 
   \end{compactenum} 
\end{lemma} 
\begin{proof} 
   Let $M_\infty$ and $(R_\chi)_\infty$ denote the cyclotomic $\Z_p$-extensions of $M$ and $R_\chi$, and write $\mathcal{G} = \Gal(M/R)$ for this proof. Using the canonical surjection ${\Gal(\overline{R}/R) \twoheadrightarrow \mathcal{G}}$, we can view $\chi$ as a character on $\mathcal{G}$. $\GKC(M/R, \chi)$ concerns the module 
   $$ \frac{\chi(1)}{|\mathcal{G}|} \cdot \sum_{\sigma \in \mathcal{G}} \chi(\sigma) \cdot \sigma^{-1} \cdot \left((A')^{(M_\infty)} \otimes_{\Z_p} \overline{\Q}_p \right). $$ 
   Since $\tilde{\mathcal{G}} := \Gal(M/R_\chi) = \ker(\chi|_M)$, $\chi$ factors through $\mathcal{G}/\tilde{\mathcal{G}} = \Gal(R_\chi/R)$. If $I := \{\sigma_1, \ldots, \sigma_l\}$ denotes a set of representatives for the cosets of $\mathcal{G}/\tilde{\mathcal{G}}$, then ${\chi(\sigma_i \rho) = \chi(\sigma_i)}$ for each $i \in \{1, \ldots, l\}$ and every $\rho \in \tilde{\mathcal{G}}$. Therefore the above module can be written as 
   \begin{align} \label{eq:gkc-reduction}  \frac{\chi(1)}{|\mathcal{G}|} \cdot \sum_{i = 1}^l \chi(\sigma_i) \cdot \sigma_i^{-1} \cdot \left( \sum_{\rho \in \tilde{\mathcal{G}}} \rho^{-1} \right) \cdot \left((A')^{(M_\infty)} \otimes_{\Z_p} \overline{\Q}_p \right) . \end{align} 
 
   The inner sum in \eqref{eq:gkc-reduction} yields a group ring element corresponding to the norm map $N_{M/R_\chi}$, and 
   $$\Gal(M/R_\chi) \cong \Gal(M_\infty/(R_\chi)_\infty)$$ because $M \cap (R_\chi)_\infty = R_\chi$. Note that  
   $$((A')^{((R_\chi)_\infty)})^{[M:R_\chi]} \subseteq  N_{M_\infty/(R_\chi)_\infty}((A')^{(M_\infty)}) \subseteq (A')^{((R_\chi)_\infty)}, $$ i.e. the characteristic power series of these three $\Lambda$-modules differ by at most a power of $p$. This shows that $\GKC(R_\chi/R, \chi)$ holds if and only if $\GKC(M/R, \chi)$ holds. 
\end{proof} 

The following proposition is due to Sinnott. 
\begin{prop}{\cite[Prop. 6.1]{FeGr81}}
\label{prop_FeGr81_Prop6.1}
\begin{compactenum}[(a)] 
    \item $$B^- \cong \Gal(L_{\infty}/L_{\infty}^{\prime})^- \cong \Big(\bigoplus_{\substack{w \mid p \\ \textup{in } H_\infty}} w \Z_p \Big)^- $$ 
    as $\Z_p\llbracket\Gal(H_\infty/R)\rrbracket$-modules. 
    \item The module $(A')^- \cong \Gal(L_\infty'/H_\infty)^-$ does not contain any nontrivial finite $\Lambda$-submodules.
\end{compactenum}
\end{prop}

Recall that $\Gal(H_\infty/R)$ acts on $X = \Gal(H_\infty/L_\infty)$. We have ${R_{\chi} \cap R_{\infty} = R}$ by assumption \eqref{eq:bedingung}. Therefore also ${R_\infty \cap H = R}$ (because $p$ does not divide the degree $[H:R_\chi]$), and 
$$ \Gal(H_\infty/R) \cong \Gal(H_\infty/H) \times \Gal(H/R) . $$ 
Let $\Delta := \Gal(H/R)$. We write $\Delta_w \subseteq \Delta$ for the decomposition group of any ${w \in S_p(H)}$ and $G_w \subseteq G = \Gal(K/R)$ for the decomposition group of a ${w \in S_p(K)}$. 

Fix a prime $v$ of $R$ dividing $p$, and consider the module 
$$ B_v := \bigoplus_{\substack{w \mid v \\ \text{in }\, H_\infty}} w \overline{\Q}_p. $$ 
Recall that $B^- \otimes_{\Z_p} \overline{\Q}_p \cong \left(\bigoplus_{v \mid p} B_v \right)^-$ by Proposition  \ref{prop_FeGr81_Prop6.1},(a). 

In the following lemma, we fix a prime $w_1$ of $H$ above $v$ and a prime $w_2$ of $K$ dividing $v$ such that $w_1 \cap R_\chi = w_{2} \cap R_\chi$, and we denote by $p^n$, for $n \in \N$, the number of primes of $H_\infty$ dividing $w_1$. Finally, let $\omega_n(T) = (T+1)^{p^n}-1$, $n \in \N$. 

\begin{lemma} \label{lemma:3.4} 
   Let $\chi \in \Ir^-(G)$ be a character with representation $V_\chi$. Then $$\varepsilon_\chi \cdot B_v \cong (\overline{\Q}_p \otimes_{\Z_p} \Lambda/(\omega_n))^{\dim_\C(V_\chi^{G_{w_2}}) \cdot \chi(1)}. $$ 
\end{lemma} 

\begin{rem} \label{rem:zu_Lemma_3.4} $\,$ \\ \vspace{-2mm} \begin{compactenum}[(1)] 
 \item Since $R_\chi$ is contained in both $H$ and $K$, we can consider $\chi$ as a character on $\Delta$ and on $G$.
  \item The number $p^n$ of primes of $H_\infty$ lying above $w_1$ does not depend on the choice of $w_1$ because the extension $H_\infty/R$ is normal. 
  \item Similarly, the dimension of the vector space $V_\chi^{G_{w_2}}$ does not depend on the choice of the prime $w_2$ of $K$ dividing $v$. 
  \item Actually $V_\chi^{G_{w_2}} = V_\chi^{\Delta_{w_1}}$ if $w_1 \cap R_\chi = w_2 \cap R_\chi$: if $D_1$ denotes the decomposition field of $w_1$ in $H/R$ and $D_2$ denotes the decomposition field of $w_2$ in $K/R$, then $D_1 \cap R_\chi = D_2 \cap R_\chi$, because the prime $w_i \cap D_i \cap R_\chi$ does not split at all in $R_\chi$, for $i \in \{1,2\}$. Now $\Gal(K/R_\chi)$ and $\Gal(H/R_\chi)$ are contained in the kernel of $\chi$. Therefore 
  $$ V_\chi^{\Delta_{w_1}} = V_\chi^{\Gal(H/D_1) \cdot \Gal(H/R_\chi)} = V_\chi^{\Gal(H/(R_\chi \cap D_1)}, $$ 
  and similarly for $V_\chi^{G_{w_2}}$. 
\end{compactenum} 
\end{rem} 

\begin{proof}[Proof of Lemma \ref{lemma:3.4}]  
  We first consider the extension $H/R$ of number fields. Let $$B_{v, H} := \bigoplus_{\substack{w \mid v \\ \textup{in } \, H}} w \overline{\Q}_p. $$ 
  The $\overline{\Q}_p[\Delta]$-module 
  $\varepsilon_\chi \cdot B_{v, H}$  
  is the direct sum of the $\overline{\Q}_p[\Delta]$-submodules of $B_{v, H}$ which are isomorphic to the representation space $V_\chi$. Now $V_\chi$ has dimension $\chi(1)$, and the multiplicity of such a representation in $B_{v, H}$ is given by $\langle \chi, \psi \rangle_{\Delta}$, where $\psi$ denotes the character of the $\overline{\Q}_p[\Delta]$-module $B_{v,H}$. 
  
  By definition of $B_{v, H}$, $\psi = \textup{Ind}_{\Delta{w_1}}^\Delta \mathds{1}_{\Delta_{w_1}}$, and therefore Frobenius reciprocity implies that 
  $$ \langle \chi , \psi \rangle_\Delta \, = \, \langle \chi|_{\Delta_{w_1}}, \mathds{1}_{\Delta_{w_1}} \rangle_{\Delta_{w_1}} \, = \, \dim_{\C}(V_\chi^{\Delta_{w_1}}) $$ 
  (see also \cite[p.~24]{Tat84}). This means that we have an isomorphism of $\overline{\Q}_p$-vector spaces 
  $$ \varepsilon_\chi \cdot \Big( \bigoplus_{\substack{w \mid v \\ \text{in }\, H}} w \overline{\Q}_p \Big) \, \cong \, \overline{\Q}_p^{\dim_\C(V_\chi^{\Delta_{w_1}}) \cdot \chi(1)}. $$  
 
 Now we go up the Iwasawa towers. 

 For every prime $w_0 \mid v$ of $H$, we note that 
 $$\bigoplus_{\substack{w \mid w_0 \\ \textup{in} \, H_\infty}} w \overline{\Q}_p $$ 
 is a $\Lambda$-module, and a $\overline{\Q}_p$-vector space of dimension $p^n = \textup{deg}(\omega_n)$, by the definition of $n$. 
 Since $\omega_n$ annihilates each prime of $H_\infty$ lying above $w_0$, it follows that we have an isomorphism 
 $$ \bigoplus_{\substack{w \mid w_0 \\ \textup{in} \, H_\infty}} w \overline{\Q}_p \, \cong \, \overline{\Q}_p \otimes_{\Z_p} \Lambda/(\omega_n)$$ 
 of $\overline{\Q}_p \otimes_{\Z_p} \Lambda$-modules. In particular, this implies that
$$ \varepsilon_\chi \cdot B_v \, \cong \, (\overline{\Q}_p \otimes_{\Z_p} \Lambda/(\omega_n))^{\chi(1) \cdot \dim_\C(V_\chi^{\Delta_{w_1}})}. $$

 As we already pointed out in Remark \ref{rem:zu_Lemma_3.4}(4), we may replace the decomposition group $\Delta_{w_1}$ by the decomposition group $G_{w_2} \subseteq G$ in the above formula if ${w_1 \cap R_\chi = w_2 \cap R_\chi}$; this proves the lemma. 
\end{proof}

Now we start with the analytic part of the proof. Recall that ${r_{S,\chi} = \textup{ord}_{s=0}L_S(s, \chi)}$, as in Section~\ref{section:Gross_vanishing}. 
\begin{lemma} \label{lemma:tate} 
  For each $\chi \in \textup{Ir}^-(G)$, we have 
  $$ r_{S,\chi} = \sum_{v \in S} \dim_{\C}(V_\chi^{G_w}), $$ 
  where $V_\chi$ denotes the representation space of $\chi$, and where $w \in S(K)$ denotes any prime dividing $v$, respectively. 
\end{lemma} 
\begin{proof} 
  See \cite[Chapter I, Proposition 3.4]{Tat84}. 
\end{proof} 

Note: the infinite places do not contribute to the above sum, since $K$ is a CM-field. 
\vspace{2mm}

Recall that $X = \Gal(L_\infty/H_\infty)$. 

Let $V_X := X \otimes_{\Z_p} \overline{\Q}_p$, and let $V_X^{(\chi)} = \varepsilon_{\chi} \cdot V_X$ be the eigenspace corresponding to the action of $\Gal(H/R)$ via $\chi$. We define $f_{X,\chi} (T)$ to be the characteristic polynomial of $\gamma - 1$, where $\gamma$ is a fixed generator of $\Gamma$ (cf. also Section \ref{section:gross_kuzmin}). 

Since the multiplicity of each zero of $f_{X,\chi}(T)$ (in the algebraic closure $\overline{\Q}_p$) is a multiple of $\chi(1)$, we can define a polynomial $g_{X,\chi}(T) \in \Z_p[\chi][[T]]$ as the monic polynomial that satisfies $f_{X,\chi}(T)=g_{X,\chi}(T)^{\chi(1)}$. 

Next we look at the connection to $p$-adic $L$-functions. Let ${c \colon \Gamma \longrightarrow \Z_{p}^{\times}}$ be the cyclotomic character and set $u:=c(\gamma)$.  Recall that $S=S_{\infty}(R) \cup S_{p}(R)$. Then for each non-trivial totally even  one-dimensional character $\psi$ there exists (e.g. by \cite{DeRi80}) a unique power series $G_{\psi}(T) \in \Z_{p}[\psi]\llbracket T  \rrbracket$  such that 
\begin{align}
\label{eqn_Lp_powerseries}
L_{p,S}(1-s, \psi)= \frac{G_{\psi}(u^{s}-1)}{H_{\psi}(u^{s}-1)} 
\end{align}
for all $s \in \Z_{p}$, where $H_{\psi}(T) = (\psi(\gamma)(1+T)-1)$ if $R_{\psi} \subseteq R_{\infty}$ and $H_{\psi}(T) = 1$ otherwise.

Assume now that $\psi$ is a character of arbitrary dimension with $R_{\psi}$ totally real. With the help of Brauer Induction one can use the one-dimensional case to define a $p$-adic $L$-function  satisfying the desired interpolation property.
Moreover, one finds power series  $G_{\psi}(T)$ and $H_{\psi}(T)$ satisfying an equation analogous to (\ref{eqn_Lp_powerseries}).

In this general situation, $G_{\psi}(T)$ a priori is contained only in the quotient field of $\Z_{p}[\psi]\llbracket T \rrbracket$, but using \cite[Prop.~5]{Gre83}, Wiles showed (cf. \cite[Thm.~1.1]{Wil90}), by proving the one-dimensional main conjecture, that if $p$ is odd and $\psi$ is such that $R_{\psi}$ is totally real, then 
   \[
   G_{\psi}(T) \in \Z_{p}[\psi]\llbracket T \rrbracket \otimes \Q_{p}.
   \]

Using  the Iwasawa main conjecture over totally real fields for one-dimensional characters (i.e. \cite[Thm.~1.2]{Wil90}), Brauer Induction and the well-known functoriality properties of the power series involved, one can deduce the

\begin{thm}{(Iwasawa main conjecture)}\\
\label{thm_IMCarbdeg}
Let $p$ be an odd prime and let $\chi$ be a totally odd character of arbitrary dimension satisfying $R_{\chi} \cap R_{\infty}=R$. Then we have
   \[
   g_{X,\chi}(T)=\{G_{\check{\chi} \omega_{R}} ( u(1+T)^{-1} - 1) \}^{\ast}. 
   \]
\end{thm} 
Here $\ast$ means the following: if $\lambda \in \Lambda$, then by the Weierstra{\ss} Preparation Theorem $\lambda$ is associated to a power of $p$ times a distinguished polynomial, and we denote by $\lambda^\ast$ this polynomial.

From Theorem~\ref{thm_IMCarbdeg} we immediately obtain
\begin{align} \label{eq:main-conjecture} 
   \chi(1) \cdot \ord_{s=0}L_{p, S}(s, \breve{\chi} \omega_{R})  = \ord_{T=0} f_{X,\chi}(T) . 
\end{align}

Now we can put the components of the proof together. By the definitions and in view of Proposition \ref{prop_FeGr81_Prop6.1}, we have an exact sequence 
\begin{align} \label{eq:exakte_sequenz}  0 \longrightarrow \bigoplus_{v \in S} \varepsilon_\chi B_v \longrightarrow \varepsilon_\chi (A \otimes_{\Z_p} \overline{\Q}_p)  \longrightarrow \varepsilon_\chi (A' \otimes_{\Z_p} \overline{\Q}_p) \longrightarrow 0.  \end{align}  
Lemma \ref{lemma:3.4} implies that 
$$\varepsilon_\chi B_v \, \cong \, (\overline{\Q}_p \otimes_{\Z_p} \Lambda/(\omega_{n(v)}(T)))^{\chi(1) \cdot \dim_\C(V_\chi^{G_{w_2(v)}})}. $$ 
Here $w_1(v)$ and $w_2(v)$ are fixed primes of $H$, respectively, $K$ dividing $v$, and $n(v)$ denotes the number of primes of $H_\infty$ above $w_1(v)$, as in the proof of Lemma~\ref{lemma:3.4}.

By the multiplicativity of characteristic power series, we obtain that the exact power of $T$ dividing 
$f_{X,\chi} = f_{A, \chi}$ equals 
\begin{align} \label{eq:T-ord} \textup{ord}_{T = 0} f_{A', \chi}(T) + \sum_{v \in S} \chi(1) \cdot \dim_\C(V_\chi^{G_{w_2(v)}}) \; \stackrel{(\ref{lemma:tate})}{=} \ord_{T=0}f_{A', \chi}(T) + \chi(1) \cdot r_{S,\chi} , \end{align}  
since $\ord_{T = 0}(\omega_m(T)) = 1$ for each $m \in \N$.

The first summand in \eqref{eq:T-ord} is zero if and only if $\GKC(H/R, \chi)$ holds. In view of Lemma \ref{lemma:equivalences}, this is equivalent to the validity of $\GKC(K/R, \chi)$. On the other hand, by the main conjecture, $\GVC(K/R, \chi)$ is true if and only if $$\textup{ord}_{T=0}f_{X,\chi}(T) = \chi(1) \cdot r_{S,\chi}.$$ 
This concludes the proof. 
\end{proof} 

\clearpage
\begin{rem} \label{rem_final}  $\;$ \\ \vspace{-4mm} \begin{compactenum}[(1)] 
    \item Equations \eqref{eq:main-conjecture} and \eqref{eq:T-ord} reprove the result 
$$ \ord_{s=0}L_{p,S}(s, \breve{\chi} \omega_R) \ge \ord_{s=0}L_S(s, \chi)$$ 
from \cite{Bur18} (cf. Remark \ref{rem:gvc},(3)), under our assumption ${K \cap R_\infty = R}$. 
\item As already mentioned in the Introduction, one can obtain Corollary~\ref{cor:equivalence} more generally by using \cite{Bur18} and \cite{Kol91}. Indeed, in \cite{Gro81} it is proved that the non-vanishing of the 'Gross regulator' is equivalent to Conjecture~1.15 in \cite{Gro81}. A 'minus-version' of \cite[Theorem~1.14]{Kol91} then shows that $\GKC^{-}(K)$ is equivalent to  Conjecture~1.15 in \cite{Gro81}. Now Burns proved in \cite{Bur18} that the 'Gross regulator' is non-zero if and only if $\GVC(K/R, \chi)$ holds. Combining these results, one obtains our Corollary~\ref{cor:equivalence} without the assumption ${K \cap R_{\infty}=R}$. 
    \item Conjecture~\ref{conj:GKC-minus} can also be formulated for $p=2$. Most of the proof of Theorem~A can be adapted in order to cover also this case. 
    
    The only step of the proof which does not carry over is the present lack of a main conjecture in the $p=2$ setting. 
\end{compactenum} 
\end{rem} 

Using Theorem~A and Corollary \ref{cor:equivalence}, one can try to derive from the known results about $\GKC$ new cases of $\GVC$; unfortunately, it turns out that the classically known \lq trivial' cases from Remark~\ref{rem_GKC_known},(1) yield only situations where $r_{S,\chi} \le 1$, and for which therefore also the $\GVC$ is known. %

\begin{cor} 
  Let $K$ be a normal CM-extension of a totally real number field $R$ and let $p \ne 2$. Suppose that $R$ contains exactly one prime above $p$ and that $K \cap R_\infty = R$. Then $\GKC^-(K)$ holds. 
\end{cor}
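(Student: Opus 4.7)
The plan is to apply Corollary~\ref{cor:equivalence}: the hypothesis $K \cap R_\infty = R$ reduces the claim $\GKC^-(K)$ to verifying $\GVC(K/R, \chi)$ for every $\chi \in \Ir_p^-(G)$, where $G = \Gal(K/R)$. In line with the discussion immediately preceding this corollary, the natural hope is that in the present setting the vanishing order $r_{S,\chi}$ is always at most $1$, so that $\GVC(K/R, \chi)$ becomes a consequence of parts (1) and (3) of Remark~\ref{rem:known_GV}.

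Concretely, fix $\chi \in \Ir_p^-(G)$ and invoke Lemma~\ref{lemma:tate}. Because $K$ is CM the infinite places contribute nothing to the sum, and since $R$ contains the unique prime $v$ above $p$ one obtains $r_{S,\chi} = \dim_{\C} V_\chi^{G_w}$ for any prime $w$ of $K$ above $v$. A key structural ingredient is that complex conjugation $\tau$ is central in $G$, because $K^+$ is the unique maximal totally real subfield of $K$ and hence $G$-stable; in particular, whether or not $\tau$ lies in $G_w$ is independent of the choice of $w$. If $\tau \in G_w$, then $V_\chi^{G_w}$ is pointwise fixed by $\tau$, but $\chi$ being totally odd forces $\tau$ to act as $-1$ on $V_\chi$; therefore $V_\chi^{G_w} = 0$, $r_{S,\chi} = 0$, and Remark~\ref{rem:known_GV}(1) yields $\GVC(K/R, \chi)$ directly.

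The remaining case $\tau \notin G_w$ is where I expect the main technical work to lie: here $v$ splits in $K/K^+$, the subgroup $G_w$ injects into $\Gal(K^+/R)$, and the primes of $K$ above $v$ come in pairs $\{w, \tau w\}$ permuted transitively by $G$, so a priori $\dim_{\C} V_\chi^{G_w}$ could exceed $1$ for higher-dimensional $\chi$. To handle this, my plan is to appeal to Brauer induction (Remark~\ref{rem:gvc}(4)) and reduce $\GVC(K/R, \chi)$ to the analogous statement for one-dimensional totally odd characters $\phi \in \Ir_p^-(\Gal(K/M))$ over totally real intermediate fields $R \subseteq M \subseteq K^+$. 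For such one-dimensional $\phi$, the order $r_{S_M, \phi}$ equals the number of primes of $M$ above $v$ that split completely in the cyclic extension $M_\phi/M$; the assumption that $R$ contains a single prime above $p$ forces these primes of $M$ to form one $\Gal(M/R)$-orbit, and a Galois-theoretic analysis exploiting the $G$-orbit structure of the primes should pin this count down to at most $1$. Once this bound is in place, Remark~\ref{rem:known_GV}(1),(3) provides $\GVC(K/M, \phi)$ for every such $\phi$, Brauer induction propagates this to every $\chi \in \Ir_p^-(G)$, and Corollary~\ref{cor:equivalence} finally delivers $\GKC^-(K)$.
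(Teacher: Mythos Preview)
Your overall strategy matches the paper's: reduce $\GKC^-(K)$ to $\GVC(K/R,\chi)$ for every $\chi \in \Ir_p^-(G)$ via Corollary~\ref{cor:equivalence}, and then invoke the known instances of $\GVC$ once $r_{S,\chi}\le 1$ is established. The paper's own proof is a single sentence: it directly asserts $r_{S,\chi}\le 1$ for all $\chi\in\Ir^-(G)$, cites \cite{DaKaVe18}, and concludes. Your case split on whether $\tau\in G_w$ is a reasonable refinement, and the case $\tau\in G_w$ (giving $r_{S,\chi}=0$) is handled correctly.

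The gap is in the case $\tau\notin G_w$. You are right that for higher-dimensional $\chi$ one only has $r_{S,\chi}=\dim_\C V_\chi^{G_w}$, which can a~priori exceed $1$; but your proposed remedy via Brauer induction does not close the argument. After passing to a one-dimensional $\phi$ over an intermediate totally real field $M$, the base $M$ will in general have several primes above $p$ (even though $R$ has only one), and $r_{S_M,\phi}$ counts those splitting completely in $M_\phi$. The ``Galois-theoretic analysis'' you allude to cannot force this count to be $\le 1$: when both $M/R$ and $M_\phi/R$ are Galois, all primes of $M$ above $p$ are conjugate and hence share the same decomposition behaviour in $M_\phi/M$, so $r_{S_M,\phi}\in\{0,\,|S_p(M)|\}$. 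A concrete obstruction is $G\cong Q_8$ over $R=\Q$ with $p$ totally split in $K$: writing the two-dimensional odd $\chi$ via induction through a cyclic subgroup $H$ of order $4$ (which necessarily contains $\tau$), the field $M=K^H$ is real quadratic with two primes above $p$, both completely split in $M_\phi=K$, whence $r_{S_M,\phi}=2$. So the Brauer-induction route, as sketched, does not land you in vanishing order $\le 1$, and the plan as written remains incomplete.
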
 
\begin{proof} 
  Indeed, since $r_{S, \chi} \le 1$ for each $\chi \in \Ir^-(G)$, $G = \Gal(K/R)$, it follows from \cite{DaKaVe18} that $\GVC(K/R, \chi)$ holds for every $\chi \in \Ir^-(G)$. This proves $\GKC^-(K)$ in view of Corollary~\ref{cor:equivalence}.
\end{proof} 
Here again we note that this result can be obtained more generally, i.e. without the condition $K \cap R_{\infty} = R$, via the work of Burns in \cite{Bur18}, as explained in Remark~\ref{rem_final}(2).

Having in mind the known results mentioned in Section \ref{section:Gross_vanishing}, it is of particular interest to produce examples where $r_{S, \chi}$ is large. Here Corollary \ref{cor:equivalence} can be very helpful in order to derive from a known instance of $\GVC(K/R, \chi)$ for some $\chi$ with $r_{S, \chi} > 0$ examples where $\GVC$ is known for some character with larger vanishing order: 
\begin{rem} \label{rem:hochgehen} 
  Let $K$ denote a Galois CM-extension of a totally real number field $R$. Suppose that $\GKC^-(K)$ is known and that we can enlarge $R$ and choose some totally real number field $\tilde{R} \subseteq K$ which strictly contains $R$. In view of Tate's formula~\ref{lemma:tate}, the value of $r_{S, \chi}$, for suitable $\chi$, can grow if we consider characters of the group $\Gal(K/\tilde{R})$. If, for example, $K/R$ is abelian, then Tate's formula just says that $r_{S, \chi}$ equals the number of primes of $R$ above $p$ which split completely in $R_\chi$. Now suppose that the primes of $R$ dividing $p$ are totally split in $\tilde{R}$. Then each character $\tilde{\chi} \in \Ir^-(\Gal(K/\tilde{R}))$ satisfies 
  $$ r_{S, \tilde{\chi}} = |S_p(\tilde{R})| = [\tilde{R}:R] \cdot |S_p(R)|. $$ 
\end{rem}

In the next section, we will prove $\GKC$ in new cases, which then also yield new instances of the Gross order of vanishing conjecture.

 \section{Proving the conjectures in new cases} \label{section:examples} 
 In this chapter we first give an equivalent formulation of the minus Gross-Kuz'min conjecture.

Then we describe two approaches for the construction of families of abelian extensions $K/R$ such that $\GKC(K/R, \chi)$ and $\GVC(K/R, \chi)$ hold for some $\chi \in \Ir^-(\Gal(K/R))$ with large vanishing order $r_{S, \chi}$ of the corresponding Artin $L$-function. The first approach makes use of a relation to Leopoldt's conjecture, and the second one is based on an application of the Brumer/Baker Theorem. 
 
 Let $p \ne 2$ be a prime, let $K$ be a CM-field with maximal totally real subfield $K^+$, let $K_\infty^+$ be the cyclotomic $\Z_p$-extension of $K^+$. We write $K_\infty^+ = \bigcup K_n^+$, and we let ${K_\infty := K_\infty^+ \cdot K = \bigcup K_n}$, where $K_n = K_n^+ \cdot K$ for each $n \in \N$. We assume that each prime of $K^+$ dividing $p$ is totally ramified in $K_\infty^+$. Let $r$ denote the number of primes above $p$ in $K^+$ which split in $K/K^+$. We identify $\Gamma_n := \Gal(K_n/K)$ with $\Gal(K_n^+/K^+)$ for every $n \in \N$. Finally, we recall that $E_K'$ denotes the group of $p$-units of $K$. 
 \begin{lemma} \label{lemma:chevalley-}
 	Under the above assumptions, $\GKC^{-}(K)$ holds if and only if there exists a constant $C \in \N$ such that 
 	\begin{align} \label{eq-cond} \frac{[E_{K^+}' : (N_n(K_n^+) \cap E_{K^+}')]}{[E_K' : (N_n(K_n) \cap E_K')]} \le \frac{1}{p^{rn-C}},  \end{align} 
 	i.e. 
 	$$[E_K' : (N_n(K_n) \cap E_K')] \ge p^{rn - C} \cdot [E_{K^+}':(N_n(K_n^+) \cap E_{K^+}')], $$ 
 	for all $n \in \N$. 
 \end{lemma}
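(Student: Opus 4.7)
The plan is to derive the lemma from Chevalley's ambiguous class number formula applied in parallel to the two cyclic $\Z/p^n$-extensions $K_n/K$ and $K_n^+/K^+$, together with a minus-part argument.

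First I would invoke Remark~\ref{rem_kolster}(1), or rather its evident minus-part analogue: $\GKC^-(K)$ is equivalent to the boundedness of $|((A_n')^-)^{\Gamma_n}|$ as $n$ varies. So it suffices to establish an asymptotic
\begin{equation*}
|((A_n')^-)^{\Gamma_n}| \;=\; p^{rn + O(1)} \cdot \frac{[E_{K^+}' : E_{K^+}' \cap N_n(K_n^{+\times})]}{[E_K' : E_K' \cap N_n(K_n^\times)]},
\end{equation*}
with an implied constant bounded uniformly in $n$. The inequality \eqref{eq-cond} then translates word-for-word into the statement that the left-hand side is bounded, which proves the lemma.

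Next I would apply Chevalley's formula to the cyclic $p$-extensions $K_n/K$ and $K_n^+/K^+$. By the total-ramification hypothesis on $K_\infty^+/K^+$ (which propagates up to $K_\infty/K$), the ramification contribution in Chevalley's formula is $\prod_{v \in S_p(K)} e_v(K_n/K) \sim p^{ns}$ for $K$ and $\prod_{v \in S_p(K^+)} e_v(K_n^+/K^+) \sim p^{ns^+}$ for $K^+$, where $s=|S_p(K)|$ and $s^+=|S_p(K^+)|$. The $r$ split primes and the $s^+ - r$ non-split primes of $K^+$ combine to give $s = 2r + (s^+ - r) = s^+ + r$, so the ratio of these ramification contributions is exactly $p^{rn}$. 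Taking the ratio of the two Chevalley formulas and using that $p$ is odd, so the natural map $A_n^{(K^+)} \to (A_n^{(K)})^+$ is a $p$-adic isomorphism up to $2$-power torsion, one obtains
\begin{equation*}
|(A_n^-)^{\Gamma_n}| \;\asymp\; p^{rn} \cdot \frac{[E_{K^+} : E_{K^+} \cap N_n(K_n^{+\times})]}{[E_K : E_K \cap N_n(K_n^\times)]}
\end{equation*}
up to a uniformly bounded factor.

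Finally, to pass from $A_n^-$ to $(A_n')^-$ and simultaneously replace full unit groups by $p$-unit groups, I would combine the defining exact sequence $0 \to B_n \to A_n \to A_n' \to 0$ with the short exact sequence relating full units and $p$-units via the divisors above $p$. The crucial observation is that in the minus part, only the $r$ primes of $K$ lying above split primes of $K^+$ contribute nontrivially to $B_n^-$, and these are exactly the primes which also distinguish $p$-units from full units in the minus part; hence the corresponding contributions in the two index calculations cancel against one another, the $p^{rn}$ factor is preserved, and the full-unit norm indices above are simply replaced by their $p$-unit counterparts, yielding the desired asymptotic. The main technical obstacle will be the careful book-keeping of the $H^1$ error terms appearing in the long exact sequences of $\Gamma_n$-invariants, together with the $p$-power cokernel of $A_n^{(K^+)} \to A_n^+$; all such errors have to be shown bounded uniformly in $n$, which ultimately rests on the fact that the inertia and decomposition groups at primes above $p$ stabilise along the cyclotomic tower.
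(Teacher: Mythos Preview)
Your overall strategy is sound and would work, but it takes a detour that the paper avoids. You apply the \emph{classical} Chevalley formula (for the full class groups $A_n$ and full unit groups $E_K$, $E_{K^+}$) and then, in a separate final step, pass from $A_n^-$ to $(A_n')^-$ and from units to $p$-units by chasing the exact sequence $0 \to B_n \to A_n \to A_n' \to 0$ and its unit-side analogue, arguing that the extra contributions cancel in the minus part up to bounded error. The paper instead invokes directly the Gras variant of Chevalley's formula for $S$-class groups and $S$-units (with $S = S_p$), which already gives
\[
|((A_n')^-)^{\Gamma_n}| \;=\; \frac{|(A_n')^{\Gamma_n}|}{|((A_n')^+)^{\Gamma_n}|} \;=\; \frac{|A_0'|}{|(A_0')^+|} \cdot \frac{e(K_n/K)}{e(K_n^+/K^+)} \cdot \frac{[E_{K^+}' : (N_n(K_n^+) \cap E_{K^+}')]}{[E_K' : (N_n(K_n) \cap E_K')]}
\]
in one line, with the $p$-units and the quotient $A'$ already in place; the ramification ratio is $p^{rn}$ exactly as you computed, and the lemma follows.

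What the paper's route buys is precisely the elimination of your ``main technical obstacle'': the $H^1$-bookkeeping and the cancellation between $B_n^-$ and the minus $p$-unit contribution are already absorbed into Gras's formula, so no separate uniform bound has to be established. Your approach would essentially reprove that variant in the special case at hand. Conversely, your route has the mild advantage that it only uses the textbook Chevalley formula and makes the role of the split primes explicit at each stage; but as it stands, the last paragraph is only a sketch, and you would still owe the reader a verification that the error terms (the cokernel of $A_n^{(K^+)} \to (A_n)^+$, the $H^1(\Gamma_n,B_n^-)$ term, and the corresponding unit-side discrepancies) are indeed bounded uniformly in~$n$.
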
 
 \begin{proof} 
 	Fix a level $n$. We use Chevalley's Theorem (see \cite[Lemma 4.1 in Chapter 13]{lang}), or more precisely the variant for $A'$ and $p$-units due to Gras (see \cite{gras}), once for the cyclic extension $K_n/K$ and once for the extension $K_n^+/K^+$. 
 	This yields: 
 	\begin{align} \label{chevalley} \frac{|(A_n')^{\Gamma_n}|}{|((A_n')^+)^{\Gamma_n}|} = \frac{|A_0'|}{|(A_0')^+|} \cdot \frac{e(K_n/K)}{e(K_n^+/K^+)} \cdot \frac{[E_{K^+}' : (N_n(K_n^+) \cap E_{K^+}')]}{[E_K' : (N_n(K_n) \cap E_K')]}. \end{align} 
 	Here $e(K_n/K)$ denotes the product of the ramification indices in $K_n/K$ of all the primes of $K$, and $e(K_n^+/K^+)$ is defined analogously. 
	
 	Now the quotient on the left hand side yields the order of $((A_n')^-)^{\Gamma_n}$. 
 	Recall that $\GKC^-(K)$ holds if and only if the orders of these groups remain bounded as $n \to \infty$.  
	
 	Since all the primes dividing $p$ are totally ramified in $K_\infty/K$ and in $K^+_\infty/K^+$, the quotient of the ramification numbers equals $p^{rn}$, where we recall that $r$ denotes the number of primes of $K^+$ dividing $p$ which split in $K/K^+$. The statement of the lemma follows: in order to have stabilisation of the orders on the left hand side, it is (see \cite[Theorem~2.1]{T-ranks}) necessary and sufficient that there exists an index $n_0 \in \N$ such that $|((A_m')^-)^{\Gamma_m}| = |((A_{n_0}')^-)^{\Gamma_{n_0}}|$ for all $m \ge n_0$. 
 \end{proof} 

\begin{cor} \label{cor:GK-trivial} 
  In particular, $\GKC^{-}(K)$ holds if no prime of $K^+$ dividing $p$ splits in $K$. 
 \end{cor}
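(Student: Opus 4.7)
The plan is to verify the equivalent condition supplied by Lemma~\ref{lemma:chevalley-}. Under the hypothesis of the corollary we have $r=0$, so that condition reduces to the statement that the ratio
\[
R_n \, := \, \frac{[E_{K^+}' : N_n(K_n^+) \cap E_{K^+}']}{[E_K' : N_n(K_n) \cap E_K']}
\]
is bounded as $n \to \infty$. Set $U_n := N_n(K_n) \cap E_K'$ and $V_n := N_n(K_n^+) \cap E_{K^+}'$. Using that for $y \in K_n^+$ one has $N_{K_n/K}(y) = N_{K_n^+/K^+}(y) \in K^+$, we get a chain $V_n \subseteq U_n \cap E_{K^+}' \subseteq U_n \subseteq E_K'$, and comparing the two factorisations of $[E_K':V_n]$ (through $U_n$ versus through $E_{K^+}'$) yields
\[
R_n \, = \, \frac{[U_n : V_n]}{[E_K' : E_{K^+}']}.
\]

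The denominator is a finite constant independent of $n$: since $r=0$, Dirichlet's theorem gives $E_K'$ and $E_{K^+}'$ the same $\Z$-rank, and the torsion quotient $\mu_K/\{\pm 1\}$ is finite. It therefore suffices to bound $[U_n : V_n]$ uniformly in $n$. I would split this as
\[
[U_n : V_n] \, = \, [U_n : U_n \cap E_{K^+}'] \cdot [U_n \cap E_{K^+}' : V_n],
\]
and bound the first factor by $[E_K' : E_{K^+}']$ via the second isomorphism theorem.

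The main step is controlling the second factor. Given $u \in U_n \cap E_{K^+}'$, write $u = N_{K_n/K}(x)$ with $x \in K_n^\times$. Since $\tau u = u$, the two factorisations of the norm $N_{K_n/K^+}$ through $K$ and through $K_n^+$ yield
\[
u^2 \, = \, N_{K/K^+}(u) \, = \, N_{K_n/K^+}(x) \, = \, N_{K_n^+/K^+}(x \cdot \tau x),
\]
and because $x \cdot \tau x \in K_n^+$ this gives $u^2 \in V_n$. Hence $(U_n \cap E_{K^+}')/V_n$ is annihilated by $2$, so its order is bounded above by $|E_{K^+}'/2E_{K^+}'|$, a constant depending only on the $\Z$-rank and the torsion of $E_{K^+}'$. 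Putting the bounds together gives $R_n \le p^C$ for some $C \in \N$, verifying the criterion of Lemma~\ref{lemma:chevalley-} and hence proving $\GKC^-(K)$.

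The only nonroutine ingredient is spotting the identity $u^2 \in V_n$, which is exactly what the commutativity $N_{K/K^+} \circ N_{K_n/K} = N_{K_n^+/K^+} \circ N_{K_n/K_n^+}$ forces once $u$ is fixed by $\tau$; the rest is standard index bookkeeping exploiting that, under $r=0$, the $p$-units of $K$ and $K^+$ have equal rank.
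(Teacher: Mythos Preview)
Your proof is correct. The paper itself gives no argument for this corollary beyond the phrase ``In particular'', leaving the verification of the criterion in Lemma~\ref{lemma:chevalley-} to the reader; you have supplied exactly that verification, and your key observation $u^{2}\in V_{n}$ via the factorisation $N_{K/K^{+}}\circ N_{K_n/K}=N_{K_n^{+}/K^{+}}\circ N_{K_n/K_n^{+}}$ is the right idea.

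One simplification is available. You bound $[U_{n}\cap E_{K^{+}}':V_{n}]$ by $|E_{K^{+}}'/2E_{K^{+}}'|$, which is fine, but in fact this index is $1$: the quotient $(U_{n}\cap E_{K^{+}}')/V_{n}$ sits inside $E_{K^{+}}'/V_{n}$, which in turn embeds into $K^{+\times}/N_{K_{n}^{+}/K^{+}}(K_{n}^{+\times})$, a group of exponent dividing $[K_{n}^{+}:K^{+}]=p^{n}$. Since $p$ is odd and you have shown the quotient is killed by $2$, it must be trivial. Hence $U_{n}\cap E_{K^{+}}'=V_{n}$, the inclusion $E_{K^{+}}'/V_{n}\hookrightarrow E_{K}'/U_{n}$ is injective, and $R_{n}\le 1$ for every $n$; so $C=0$ works. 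This is presumably the ``immediate'' argument the authors had in mind, and it removes the need to invoke the finiteness of $[E_{K}':E_{K^{+}}']$ or the $2$-rank bound.
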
 

 In \cite{green-l-adic}, Greenberg implicitly proves the following reduction theorem;  this result has been reproved and used by Jaulent in \cite{Jau17}. 
 \begin{thm} \label{thm:red_1} 
 	$\GKC^-(K)$ holds if \begin{compactenum}[(a)] 
		\item $K$ contains a CM-subfield $k$ such that the prime $p$ is undecomposed in $K/k$, and 
		\item $\GKC^-(k)$ holds.
	\end{compactenum} 
 \end{thm}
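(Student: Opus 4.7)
The plan is to compare the minus parts of the Iwasawa modules $((A')^{(K_\infty)})^-$ and $((A')^{(k_\infty)})^-$ via the relative norm, using the undecomposedness of $p$ in $K/k$ to control the kernel and cokernel, so that the finiteness of $(((A')^{(k_\infty)})^-)_\Gamma$ forces the finiteness of $(((A')^{(K_\infty)})^-)_\Gamma$.

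Since $k \subseteq K$ are CM-fields, we have $k_\infty \subseteq K_\infty$ with both $\Z_p$-Galois groups canonically identified with $\Gamma$, so $\Lambda = \Z_p\llbracket\Gamma\rrbracket$ acts on both modules. The relative norm maps $N_{K_n/k_n}$ at each finite layer respect the subgroups $B_n$ (norms of primes above $p$ remain above $p$) and hence descend to the $A'$-quotients. Passing to inverse limits and restricting to minus parts yields a $\Lambda$-homomorphism
\[ \mathcal{N} \colon ((A')^{(K_\infty)})^- \longrightarrow ((A')^{(k_\infty)})^-, \]
which fits into a morphism between the exact sequence
\[ 0 \to (B^{(K_\infty)})^- \to (A^{(K_\infty)})^- \to ((A')^{(K_\infty)})^- \to 0 \]
and its $k$-analogue (cf.\ Proposition~\ref{prop_FeGr81_Prop6.1}), with vertical arrows given by the relative norms.

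The undecomposedness hypothesis gives a bijection between primes of $K_\infty$ above $p$ and those of $k_\infty$ above $p$ (primes above $p$ eventually stabilise in the cyclotomic tower). In Sinnott's formula of Proposition~\ref{prop_FeGr81_Prop6.1}(a), the leftmost vertical map then becomes, on each matched pair of summands, multiplication by the residue degree $f(\mathfrak{P}/\mathfrak{p})$, hence has finite kernel and cokernel. The middle vertical map $(A^{(K_\infty)})^- \to (A^{(k_\infty)})^-$ has kernel whose $\Gamma$-coinvariants are bounded by standard genus theory via Chevalley's formula applied layerwise to $K_n/k_n$. A snake-lemma argument transfers these finiteness properties to $\mathcal{N}$, and combined with $\GKC^-(k)$, which gives finiteness of $(((A')^{(k_\infty)})^-)_\Gamma$, yields $\GKC^-(K)$. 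The main obstacle is the precise layerwise unit analysis showing that the $\Gamma$-coinvariants of $\ker(\mathcal{N})$ stay bounded; this is exactly where undecomposedness is essential, as it prevents capitulation/genus kernels from growing through additional split primes above $p$ appearing in the tower.
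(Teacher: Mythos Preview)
First, note that the paper does not itself prove this theorem: it attributes the result to Greenberg (\cite{green-l-adic}, Section~3) and Jaulent (\cite{Jau17}, Scolie~10), so there is no in-paper argument to compare against directly.

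Your norm-plus-snake-lemma strategy is reasonable in outline, and the treatment of $B^-$ is correct: undecomposedness of $p$ in $K/k$ yields a $\tau$-equivariant bijection of primes above $p$, so the norm on $B^-$ has finite kernel and cokernel. The genuine gap is exactly where you yourself locate ``the main obstacle'': the claim that the kernel of the middle norm $(A^{(K_\infty)})^- \to (A^{(k_\infty)})^-$ has bounded $\Gamma$-coinvariants. Your justification, ``Chevalley's formula applied layerwise to $K_n/k_n$'', does not deliver this. Chevalley's formula (when applicable at all --- nothing in the hypothesis forces $K/k$ to be Galois, let alone cyclic) computes the $\Gal(K_n/k_n)$-\emph{invariants} of $A_{K_n}$, which is neither the norm kernel nor information about the action of $\Gamma = \Gal(K_\infty/K)$. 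Worse, combining the two $B^-$-sequences with the equality $r_K = r_k$ (which undecomposedness does give) and multiplicativity of characteristic polynomials, one checks that, granting $\GKC^-(k)$, the conclusion $\GKC^-(K)$ is \emph{equivalent} to $T$ not dividing the characteristic polynomial of $\ker \mathcal{N}$. So this step is the entire content of the theorem, not a routine consequence of genus theory. The cited proofs proceed differently, through the $p$-adic logarithm / Gross-regulator reformulation (Greenberg) or the logarithmic class group formalism (Jaulent), where undecomposedness translates more directly into the required comparison between $K$ and $k$.
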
 
 \vspace{2mm} 
 
In what follows, we will derive two approaches for constructing finite families of extensions $K/R$ such that $\GKC(K/R, \chi)$ holds for some $\chi \in \Ir^-(\Gal(K/R))$ with large vanishing order $r_{S, \chi}$. The first approach is related to Leopoldt's conjecture. 
\begin{prop} 
\label{prop_Leopoldt_totallysplit}
  Let $K$ be a CM-field, and suppose that Leopoldt's conjecture holds for $K$. If the prime $p$ is totally split in $K/\Q$, then $\GKC^-(K)$ holds. 
\end{prop}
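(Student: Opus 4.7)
The plan is to reduce the proposition to Lemma~\ref{lemma:chevalley-} and then to an injectivity statement that follows from Leopoldt's conjecture. Since $p$ is totally split in $K/\Q$, every prime $\mathfrak p^+$ of $K^+$ above $p$ splits in $K/K^+$, so the parameter $r$ of Lemma~\ref{lemma:chevalley-} equals $d:=[K^+:\Q]$, and the goal is to produce a constant $C\in\N$ with
\[
 [E_K':N_n(K_n)\cap E_K']\;\ge\;p^{dn-C}\cdot[E_{K^+}':N_n(K_n^+)\cap E_{K^+}']
\]
for every $n$.

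The extensions $K_n/K$ and $K_n^+/K^+$ are cyclic and unramified outside $p$, so by Hasse's norm principle these indices equal the orders of the images of $E_K'$ and $E_{K^+}'$ in the products of local norm residue groups, which are isomorphic to $(\Z/p^n)^{2d}$ and $(\Z/p^n)^d$ respectively. Since $p$ is odd, $\tau=\Gal(K/K^+)$ splits $E_K'\otimes\Z_p=(E_K')^+\oplus(E_K')^-$, with $(E_K')^+$ and $E_{K^+}'\otimes\Z_p$ agreeing up to bounded index; the same decomposition applies to the target, and the plus contribution matches the $K^+$-image up to a bounded factor. The desired inequality therefore reduces to showing that the order of the minus image
\[
 \mathcal I_n\;:=\;\operatorname{Im}\bigl((E_K')^-\otimes\Z_p\longrightarrow((\Z/p^n)^{2d})^-\bigr)
\]
is bounded below by $p^{dn-C}$, uniformly in $n$.

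The map into $(\Z/p^n)^{2d}$ factors through the principal-unit component, so it suffices to show that the $\Z_p$-module map
\[
 \psi\colon (E_K')^-\otimes\Z_p\longrightarrow\Bigl(\prod_{\mathfrak p\mid p}U_{\mathfrak p}^{(1)}\Bigr)^-
\]
has image of finite $\Z_p$-index. Under the totally-split hypothesis both source and target are free $\Z_p$-modules of rank $d$: the source has rank $d$ because the minus part of $p$-units is, modulo torsion, generated by elements $\alpha_j/\tau\alpha_j$ with $\alpha_j$ generating $\mathfrak p_j^{m_j}$ (one for each pair $\{\mathfrak p_j,\bar{\mathfrak p}_j\}$); the target has rank $d$ because each $K_{\mathfrak p}=\Q_p$, so $U_{\mathfrak p}^{(1)}\cong\Z_p$, and the minus part picks up one $\Z_p$ per prime pair. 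Hence it is enough to prove that $\psi$ is injective modulo $\Z_p$-torsion.

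This injectivity is the heart of the argument, and the step where Leopoldt's conjecture for $K$ enters. Under Leopoldt, the semilocal embedding $E_K'\otimes\Z_p\hookrightarrow\prod(K_{\mathfrak p}^\times)_{(p)}$ is injective, so its restriction to the minus part is injective into $V^-\oplus W^-$. If $u\in(E_K')^-\otimes\Z_p$ lies in $\ker\psi$, then $u$ has trivial principal unit-part at each $\mathfrak p\mid p$, and because $\tau u=u^{-1}$ modulo torsion, Hilbert's Theorem~90 for $K/K^+$ gives $u=w/\tau w$ with $w\in E_K'$; the vanishing of the unit part of $u$ translates into equalities of $p$-adic logarithms of the local embeddings of $w$ at each pair $\mathfrak p$ and $\bar{\mathfrak p}$. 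Because $p$ is totally split, these embeddings all land in $\Q_p$, so the resulting constraints on $w$ and its norm $w\tau w\in E_{K^+}'\otimes\Z_p$ can be compared directly with Leopoldt's conjecture for $K^+$ (which is implied by Leopoldt for $K$), and one concludes that $u$ must be torsion. The main obstacle here is precisely this last comparison: showing that the equalities of logarithms forced by $u\in\ker\psi$ violate Leopoldt unless $u=0$. This is a Leopoldt-type non-vanishing of a \emph{minus $p$-adic regulator for $p$-units}, and the totally-split hypothesis is essential, since it allows Leopoldt for the global units of $K^+$ to control the minus regulator for the $p$-units of $K$ through the pairing of primes $\mathfrak p\leftrightarrow\bar{\mathfrak p}$.
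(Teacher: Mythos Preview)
Your approach via Lemma~\ref{lemma:chevalley-} and norm indices is different from the paper's, and it has a genuine gap at the decisive step. You correctly reduce the question to showing that
\[
 \psi\colon (E_K')^-\otimes\Z_p\;\longrightarrow\;\Bigl(\prod_{\mathfrak p\mid p}U_{\mathfrak p}^{(1)}\Bigr)^-
\]
has image of full $\Z_p$-rank $d$. But the non-degeneracy of $\psi$ is precisely the non-vanishing of the minus Gross regulator, which by Remark~\ref{rem_kolster} is \emph{equivalent} to $\GKC^-(K)$ itself. Your sketch for extracting this from Leopoldt---writing $u=w/\tau w$ via Hilbert~90 and then ``comparing with Leopoldt for $K^+$''---is not carried out, and cannot work as stated: Leopoldt's conjecture constrains the global units $E_K$, whose minus part is torsion, so it gives no direct information about $(E_K')^-$. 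Concretely, take $K$ imaginary quadratic ($d=1$): Leopoldt is vacuous, yet the injectivity of $\psi$ is a genuine statement (true there for elementary valuation reasons, but not via the mechanism you propose). You yourself flag this step as ``the main obstacle''; it is not merely an obstacle but the entire content of the proposition, and your argument is circular at this point.

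The paper avoids this circularity by working on the Iwasawa-module side. From the exact sequence $0\to B^-\to A^-\to (A')^-\to 0$ of Proposition~\ref{prop_FeGr81_Prop6.1} one gets
\[
  \ord_{T=0}f_{A^-}(T)\;=\;\ord_{T=0}f_{B^-}(T)+\ord_{T=0}f_{(A')^-}(T).
\]
The totally-split hypothesis forces $\ord_{T=0}f_{B^-}(T)=r_2(K)$ (every prime of $K^+$ above $p$ splits in $K$, and each contributes a factor of $T$). On the other hand, Leopoldt for $K$ gives the standard Iwasawa-theoretic bound $\rg_{\Z_p}(A_\Gamma)=r_2(K)$: the maximal abelian pro-$p$-extension of $K$ that is unramified over $K_\infty$ sits inside the maximal $p$-ramified abelian pro-$p$-extension of $K$, whose Galois group has $\Z_p$-rank exactly $r_2(K)+1$ under Leopoldt. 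Combining (and using that Leopoldt for $K^+$ makes $(A^+)_\Gamma$ finite) forces $\ord_{T=0}f_{(A')^-}(T)=0$. This is where Leopoldt actually enters: as an upper bound on $\rg_{\Z_p}(A_\Gamma)$ coming from class field theory, not as a statement about $p$-unit regulators.
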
 
\begin{proof} 
   Let $A = A^{(K_\infty)}$ be defined as in Section \ref{section:gross_kuzmin}. First, it follows that Leopoldt's conjecture also holds for $K^+$, i.e. the quotient $A^+/(T \cdot A^+)$ is finite. Moreover, there exist exactly $2 r_2(K)$ primes in $K$ and $r_2(K)$ primes in $K^+$. Since Leopoldt's conjecture holds for $K$, we know that 
   $$ \textup{ord}_{T = 0}(f_{A}^-(T)) = \rg_{\Z_p}(A/(T \cdot A)) = r_2(K). $$ 
   Now we consider the exact sequence 
   $$ \xymatrix{0 \ar[r] & \bigoplus\limits_{i=1}^r \Lambda/(\omega_{n_i}(T)) \ar[r] & A^- \ar[r] & A'^- \ar[r] & 0} $$ 
   which follows from the exact sequence  \eqref{eq:exakte_sequenz} in the proof of Theorem~A (equivalence of the two conjectures) by specialising to the case $R = K^+$. Here $r = r_{S, \chi}$ for the only non-trivial character of $\Gal(K/K^+)$, and the integers $n_i$ are defined as in Lemma~\ref{lemma:3.4}. Since $p$ is totally split in $K/\Q$, it follows that $r$ equals the number of primes of $K^+$ above $p$, i.e. $r = r_2(K)$. 
   This proves that $\ord_{T=0} f_{A'}^-(T) = 0$. 
\end{proof} 

Now we can use the existing literature on Leopoldt's conjecture for deriving cases of the $\GKC$.

\begin{thm}{\cite[Thm.~2.1~c)]{Kli90}}
\label{thm_Thm2.1c_Kli90}
Let $K/ \Q$ be an imaginary Galois extension with group $G$, and let $\tau \in G$ denote the complex conjugation, well defined up to conjugation in $G$. Then Leopoldt's conjecture is true for all $p$, provided that for all irreducible characters $\chi$ of $G$ the condition 
\begin{align}
\label{cond_Thm2.1c_Kli90}
\chi(1)+\chi(\tau) \leq 2
\end{align}
holds.
\end{thm}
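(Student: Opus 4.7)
The plan is to decompose Leopoldt's conjecture for $K$ characterwise and then to apply the $p$-adic Baker--Brumer theorem to each piece. First I would split the Leopoldt defect $\delta^L_K$, namely the $\overline{\Q}_p$-dimension of the kernel of the natural map $E_K \otimes_\Z \Z_p \longrightarrow \prod_{\mathfrak{p} \mid p} U^{(1)}_\mathfrak{p}$, into $\chi$-isotypic pieces $\delta^L_\chi$ using the idempotents $\varepsilon_\chi \in \overline{\Q}_p[G]$. It then suffices to prove $\delta^L_\chi = 0$ for every $\chi \in \textup{Ir}(G)$ satisfying the hypothesis.

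A multiplicity calculation is what makes the condition $\chi(1) + \chi(\tau) \leq 2$ bite. By Dirichlet's unit theorem, since $K$ is a CM-field the $G$-character of $E_K \otimes \C$ is $\mathrm{Ind}_{\langle \tau \rangle}^G \mathbf{1} - \mathbf{1}_G$, so Frobenius reciprocity gives
\begin{equation*}
\langle \chi,\, E_K \otimes \C \rangle \;=\; \frac{\chi(1) + \chi(\tau)}{2} \;-\; \delta_{\chi, \mathbf{1}},
\end{equation*}
which is $0$ for the trivial character and at most $1$ for every other $\chi$ allowed by the hypothesis. When this multiplicity is $0$ the defect $\delta^L_\chi$ vanishes trivially, so everything reduces to those $\chi$ for which $V_\chi$ occurs exactly once in $E_K \otimes \C$. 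For such $\chi$ the $\chi$-isotypic component is the $\overline{\Q}$-linear span of the Galois translates of a single unit $u \in E_K$, and $\delta^L_\chi \neq 0$ would produce a nontrivial $\overline{\Q}_p$-linear dependency
\begin{equation*}
\sum_{g \in G}\, c_g\, \log_p(g u) \;=\; 0.
\end{equation*}
The $p$-adic Baker--Brumer theorem forbids such a relation among $p$-adic logarithms of algebraic numbers unless it comes from a multiplicative dependence among the $gu$; but any such dependence would force $V_\chi$ to appear with multiplicity at least $2$ inside $E_K \otimes \C$, contradicting the hypothesis.

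The hard part will be matching the Baker--Brumer setup --- classically formulated for units in an abelian extension of $\Q$ or of an imaginary quadratic field --- with the potentially non-abelian field $K$. For a one-dimensional $\chi$ this is immediate since $R_\chi \subseteq K$ is abelian over $\Q$ by construction. For a two-dimensional $\chi$ the constraint leaves only $\chi(\tau) \in \{-2,0\}$; the case $\chi(\tau) = -2$ again has multiplicity $0$ and is trivial, while $\chi(\tau) = 0$ forces $\tau$ to have eigenvalues $\pm 1$ on $V_\chi$, so the relevant units actually lie inside $K^+$ and $\chi$ can be exhibited as an integral combination of induced characters $\mathrm{Ind}_H^G \psi$, where $H$ is an index-two subgroup containing $\tau$ and $\psi$ is one-dimensional; the Brumer-type statement is then applied on the abelian fixed field $K^H$. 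Carrying out this Brauer induction compatibly on both the global unit side and the local units side, and checking that the $\chi$-defect of $K$ equals the sum of the $\psi$-defects on the corresponding abelian subfields, is the technical core of the argument.
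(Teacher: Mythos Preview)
The paper does not prove this theorem; it is quoted verbatim from \cite{Kli90} and used as a black box, so there is no in-paper argument to compare against. That said, your overall strategy---decompose the Leopoldt defect characterwise, use Frobenius reciprocity to see that the hypothesis forces each multiplicity $m_\chi = \tfrac{1}{2}(\chi(1)+\chi(\tau)) - \delta_{\chi,\mathbf{1}} \le 1$, and then invoke Brumer's $p$-adic Baker theorem on each multiplicity-one piece---is precisely Klingen's, which in turn goes back to Ax. One minor slip: you should not assume $K$ is CM; the theorem concerns an arbitrary imaginary Galois $K/\Q$, and your unit-character formula already holds in that generality since the archimedean $G$-set is $G/\langle\tau\rangle$.

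Your ``hard part'', however, rests on a misconception. Brumer's theorem \cite{Bru67} asserts that if $\alpha_1,\dots,\alpha_n \in \overline{\Q}^{\,\times}$ have $p$-adic logarithms linearly independent over $\Q$, then these logarithms remain linearly independent over the algebraic closure of $\Q$ in $\overline{\Q}_p$; there is \emph{no} abelianness hypothesis on the field generated by the $\alpha_i$. Hence no case distinction by $\dim\chi$, no Brauer induction, and no passage to index-two subgroups is needed. The uniform argument runs as follows: if $\delta_\chi^L\neq 0$ then, because $\varepsilon_\chi(E_K\otimes\overline{\Q}_p)\cong V_\chi$ is irreducible, the entire $\chi$-component lies in the kernel of the logarithm map; choosing $u\in E_K$ with $\varepsilon_\chi u\neq 0$ therefore yields $\sum_{g\in G}\chi(g)\,\log_p\!\big(\sigma_0(g^{-1}u)\big)=0$, a relation with \emph{algebraic} coefficients $\chi(g)$. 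Brumer then forces $\sum_g \chi(g)\,g^{-1}$ to lie in $\mathrm{Ann}_{\Q[G]}(u)\otimes_\Q\overline{\Q}$, i.e.\ $\varepsilon_\chi u=0$ already in $E_K\otimes\overline{\Q}$, contradicting the choice of $u$. Note also that the contradiction you state (``multiplicity at least $2$'') is not the right one: what Brumer produces is $\varepsilon_\chi u=0$, not an extra copy of $V_\chi$.
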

In the case which is of interest to us, namely $K$ being a CM-field, this condition is quite restrictive. In fact, Klingen shows in \cite[Lemma~2.3]{Kli90} that under the condition that $K$ is a CM-field hypothesis (\ref{cond_Thm2.1c_Kli90}) is satisfied if and only if $G/ \left\langle \tau \right\rangle$ is abelian. So these cases can also be covered by combining the classical facts that Leopoldt's conjecture is proven for abelian extensions over $\Q$ and that Leopoldt's conjecture holds for a CM-field $K$ if and only if it holds for its maximal totally real subfield $K^{+}$ (see e.g. \cite[Cor.~10.3.11~(ii)]{NeScWi08}). The main examples for such extensions which are not abelian over $\Q$ are $D_{4}$- and $Q_{8}$-extensions.

Our interest for these results stems from the following corollary to Theorem~\ref{thm_Thm2.1c_Kli90}. 
\begin{cor}{\cite[Cor.~4.2.2]{Kli90}}
\label{cor_Cor4.2.2_Kli90}
  Let $K/\Q$ be an imaginary Galois extension with Galois group $G$, and let $\tau$ denote the complex conjugation in $G$. Suppose that condition  (\ref{cond_Thm2.1c_Kli90}) holds. Then Leopoldt's conjecture is true for all composita $KL$ of $K$ with a real abelian extension $L$ of $\Q$.
\end{cor}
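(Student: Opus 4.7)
The plan is to verify that the compositum $F := KL$ itself satisfies the hypothesis~(\ref{cond_Thm2.1c_Kli90}) of Theorem~\ref{thm_Thm2.1c_Kli90}, which will then directly yield Leopoldt's conjecture for $F$ at every prime $p$. Let $G' := \Gal(F/\Q)$, and let $\tau' \in G'$ denote complex conjugation on $F$. Since $L$ is totally real, $\tau'$ restricts to the identity on $L$; since $K$ is imaginary, $\tau'$ restricts to $\tau$ on $K$. Via restriction to $K$ and to $L$, the group $G'$ embeds as a subgroup of $G \times \Gal(L/\Q)$ (namely, as the fiber product over $\Gal((K\cap L)/\Q)$), and under this embedding $\tau'$ corresponds to $(\tau, \id_L)$. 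This element indeed lies in $G'$, because $K \cap L \subseteq L$ is totally real, so $\tau$ acts trivially on $K\cap L$ and $(\tau, \id_L)$ satisfies the required compatibility.

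The heart of the argument is the character-theoretic verification that every $\psi \in \Ir(G')$ satisfies $\psi(1) + \psi(\tau') \le 2$. Since $\Gal(L/\Q)$ is abelian, every irreducible character of $G \times \Gal(L/\Q)$ has the form $\chi \otimes \lambda$ with $\chi \in \Ir(G)$ and $\lambda$ a one-dimensional character of $\Gal(L/\Q)$. By Frobenius reciprocity, any given $\psi \in \Ir(G')$ appears in the restriction $(\chi \otimes \lambda)\big|_{G'}$ for at least one such pair $(\chi, \lambda)$. Decomposing this restriction as $(\chi \otimes \lambda)\big|_{G'} = \sum_i \psi_i$ into irreducible characters of $G'$, and using $\lambda(\tau') = 1$ (because $\tau'$ is trivial on $L$), one obtains
\[
\sum_i \psi_i(1) = \chi(1), \qquad \sum_i \psi_i(\tau') = \chi(\tau),
\]
and therefore
\[
\sum_i \bigl(\psi_i(1) + \psi_i(\tau')\bigr) \;=\; \chi(1) + \chi(\tau) \;\le\; 2
\]
by the hypothesis on $K$. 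Since $(\tau')^2 = 1$, each summand $\psi_i(1) + \psi_i(\tau')$ equals twice the dimension of the $(+1)$-eigenspace of $\tau'$ on the representation space of $\psi_i$, and is hence a non-negative integer; combined with the bound on the total, each individual summand must also be at most $2$. In particular $\psi(1) + \psi(\tau') \le 2$, as desired.

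Applying Theorem~\ref{thm_Thm2.1c_Kli90} to $F = KL$ with the involution $\tau'$ now yields Leopoldt's conjecture for $F$ at every prime $p$. The step that deserves the most care is the character-theoretic one, but the non-negativity of $\psi_i(1) + \psi_i(\tau')$ makes it essentially automatic once one identifies $G'$ as a subgroup of $G \times \Gal(L/\Q)$ and exploits the triviality of $\tau'$ on the real abelian factor $L$; no additional input beyond Theorem~\ref{thm_Thm2.1c_Kli90} is needed.
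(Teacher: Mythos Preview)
The paper does not supply its own proof of this corollary; it is simply quoted from \cite{Kli90}, so there is no in-paper argument to compare against. Your proof is correct and is the natural way to deduce the corollary from Theorem~\ref{thm_Thm2.1c_Kli90}: embed $G' = \Gal(KL/\Q)$ into $G \times \Gal(L/\Q)$, observe that $\tau'$ maps to $(\tau,\id_L)$, and use that every irreducible $\psi$ of $G'$ occurs in the restriction of some $\chi \otimes \lambda$ with $\lambda$ one-dimensional, so that $\sum_i(\psi_i(1)+\psi_i(\tau')) = \chi(1)+\chi(\tau) \le 2$ forces each non-negative summand to be at most $2$. All steps are sound; in particular the non-negativity of $\psi_i(1)+\psi_i(\tau')$ via the $(+1)$-eigenspace of the involution $\tau'$ is exactly the right observation.

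One minor remark: the paper notes (via \cite[Lemma~2.3]{Kli90}) that for CM-fields condition~(\ref{cond_Thm2.1c_Kli90}) is equivalent to $G/\langle\tau\rangle$ being abelian. In that special case one could alternatively argue that $G'/\langle\tau'\rangle$ embeds into $(G/\langle\tau\rangle)\times\Gal(L/\Q)$ and is hence abelian, reaching the same conclusion. Your character-theoretic argument has the advantage of not requiring the CM hypothesis and matches the generality of the stated corollary.
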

Now we can show our Theorem~B from the Introduction.
\begin{thm} \label{thm:q8} 
There exist infinitely many $Q_{8}$-and $D_{4}$-CM-extensions $K$ of totally real number fields for which  $\GKC(K)$ holds for infinitely many primes. In particular, for infinitely many primes and infinitely many $D_{4}$- and $Q_{8}$-extensions $K/R$, ${\GVC(K/R, \chi)}$ holds for all characters $\chi \in \Ir^-(\Gal(K/R))$. This includes examples with arbitrarily large vanishing order.
\end{thm}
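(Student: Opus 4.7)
The plan is to exhibit the desired extensions as composita $K = K_0 L$, where $K_0/\Q$ is a fixed $Q_8$- or $D_4$-CM-extension and $L/\Q$ is a suitably chosen real abelian extension. The proof combines three ingredients developed above: Klingen's Theorem~\ref{thm_Thm2.1c_Kli90} together with its Corollary~\ref{cor_Cor4.2.2_Kli90} (guaranteeing Leopoldt's conjecture), Proposition~\ref{prop_Leopoldt_totallysplit} (deducing $\GKC^{-}$ from Leopoldt plus total splitting of $p$), and the equivalence between $\GKC$ and $\GVC$ provided by Corollary~\ref{cor:equivalence}.

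First I would fix any CM-extension $K_0/\Q$ with Galois group $G_0$ isomorphic to $Q_8$ or $D_4$; such extensions are classically known to exist in abundance. In both groups the complex conjugation $\tau$ coincides with the unique central involution, and the quotient $G_0/\langle \tau \rangle \cong (\Z/2\Z)^2$ is abelian, so Klingen's Theorem~\ref{thm_Thm2.1c_Kli90} yields Leopoldt's conjecture for $K_0$ at every prime. For each integer $N \geq 1$, I would then choose a real abelian extension $L/\Q$ of degree at least $N$, linearly disjoint from $K_0$ over $\Q$ (for instance, a compositum of real quadratic fields whose discriminants are coprime to that of $K_0$), and by Chebotarev density I would fix an odd prime $p$ that splits completely in both $K_0$ and $L$. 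Setting $K := K_0 L$ and $R := L$, one checks that $\Gal(K/R) \cong G_0$, that $K$ is CM with maximal totally real subfield $K^+ = K_0^+ L$ abelian over $\Q$, that $R$ is totally real, and that $p$ is totally split in $K/\Q$. Klingen's Corollary~\ref{cor_Cor4.2.2_Kli90} then transports Leopoldt's conjecture from $K_0$ to the whole field $K$.

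Proposition~\ref{prop_Leopoldt_totallysplit} now delivers $\GKC^-(K)$ at $p$, while Greenberg's result for abelian fields over $\Q$ (see Remark~\ref{rem_GKC_known}(4) and \cite{green-l-adic}) gives $\GKC(K^+)$, which for the odd prime $p$ is equivalent to $\GKC^+(K)$ via the standard pseudo-isomorphism between $A'^{(K_\infty),+}$ and $A'^{(K^+_\infty)}$; combining these yields $\GKC(K)$ at $p$. Because $[K:R] = 8$ is coprime to $p$ while $R_\infty/R$ is pro-$p$, the extensions $K/R$ and $R_\infty/R$ are linearly disjoint over $R$, so $K \cap R_\infty = R$ holds automatically, and Corollary~\ref{cor:equivalence} translates $\GKC^-(K)$ into $\GVC(K/R, \chi)$ for every $\chi \in \Ir^-(\Gal(K/R))$. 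Both $\Ir^-(Q_8)$ and $\Ir^-(D_4)$ contain the unique faithful $2$-dimensional irreducible character $\chi$. Since $p$ splits completely in $K/\Q$, every decomposition group $G_w \subseteq \Gal(K/R)$ for $w \in S_p(K)$ is trivial, so Tate's formula (Lemma~\ref{lemma:tate}) yields $r_{S,\chi} = |S_p(R)| \cdot \chi(1) = 2[L:\Q] \geq 2N$, which can be made arbitrarily large by choosing $N$ large.

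For each fixed $K$ constructed as above, infinitely many primes at which $\GKC(K)$ holds are produced by Chebotarev applied to the set of rational primes that are totally split in $K$. Infinitely many distinct extensions $K = K_0 L$ arise by varying $L$ through an infinite family of pairwise linearly disjoint real abelian extensions, or by varying $K_0$ itself through distinct $Q_8$- or $D_4$-CM-extensions of $\Q$. The principal technical obstacle is the coordinated bookkeeping in the construction step: one must simultaneously arrange $L \cap K_0 = \Q$ (so that $\Gal(K/R)$ is genuinely $G_0$), complete splitting of $p$ in both factors, and unbounded growth of $[L:\Q]$ while preserving these properties. Each constraint is classical in isolation, but their interplay must be arranged with care to ensure that the family of triples $(K, R, p)$ is truly infinite in all the required parameters.
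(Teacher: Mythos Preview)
Your proof is correct and follows essentially the same approach as the paper: build $K = K_0 L$ from a fixed $Q_8$- or $D_4$-CM-extension $K_0/\Q$ and a varying real abelian $L$, invoke Klingen's results (Theorem~\ref{thm_Thm2.1c_Kli90} and Corollary~\ref{cor_Cor4.2.2_Kli90}) to secure Leopoldt for $K$, choose $p$ totally split via Chebotarev, apply Proposition~\ref{prop_Leopoldt_totallysplit} for $\GKC^-(K)$, and then translate to $\GVC$ via Theorem~A. Your additional step---deducing the full $\GKC(K)$ from $\GKC^-(K)$ together with Greenberg's theorem for the abelian field $K^+ = K_0^+ L$---is a refinement that the paper's own proof leaves implicit.
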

\begin{proof}
Let $r \in \N$ and  $M/ \Q$ be a $D_{4}$- or $Q_{8}$-CM-extension. We construct $R$ by composing sufficiently many real quadratic number fields, not contained in $M/\Q$, such that $[R: \Q] \geq r$. Then $MR/ \Q$ is a Galois CM-extension which is non-abelian over $\Q$. With Theorem~\ref{thm_Thm2.1c_Kli90} and Corollary~\ref{cor_Cor4.2.2_Kli90} we get that Leopoldt's conjecture holds for $MR$ for all $p$. By the Chebotarev density Theorem there are infinitely many primes  which split completely in the Galois extension $MR/\Q$. For all these primes we can use Proposition~\ref{prop_Leopoldt_totallysplit} to derive  $\GKC^-(MR)$. 
Now the vanishing order of the unique totally odd irreducible character $\chi$ of $\Gal(MR/R)$ is 
$$r_{S, \chi} = 2 \cdot |S_p(R)|$$ 
by Tate's Lemma \ref{lemma:tate}, since $\chi(1)=2$ and $G_w = \{1\}$ for each $w$. For all odd primes we can use Theorem~A in order to deduce $\GVC(MR/R, \chi)$ from $\GKC(MR/R, \chi)$: the condition $MR \cap R_\infty = R$ is satisfied since $p$ does not divide the degree $[MR :R]$, which is a power of $2$. 
\end{proof}

Now we describe the second approach for constructing new examples where $\GKC(K/R, \chi)$ holds for some character $\chi$ with large vanishing order $r_{S, \chi}$.

 \begin{thm} \label{thm:weihnacht}
  Let $K$ be a finite normal CM-extension of a totally real number field $R$, and suppose that \begin{compactenum}[(a)] 
  	   \item there exists some prime $\p$ of $R$ above $p$ which is totally split in $K$, and such that $R_\p = \Q_p$, 
  	   \item $G := \Gal(K/R)$ is abelian. 
  	\end{compactenum} 	
    Let $A' = (A')^{(K_\infty)}$ be defined as in Section \ref{section:gross_kuzmin}. Then 
  	$$ \rg_{\Z_p}((A')^-_\Gamma) \le r - s, $$ 
  	where $r$ denotes the number of primes of $K^+$ which split in $K$, and $[K:R] = 2s$. 
\end{thm}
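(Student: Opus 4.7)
My plan is to generalize the argument of Greenberg from \cite{green-l-adic}, where the analogous statement is proved for abelian CM extensions $K/\Q$ with $p$ totally split in $K$, to the present relative setting. The main ingredients will be a Chevalley-style ambiguous class number formula, asymptotic results on $\Gamma$-coinvariants from \cite{T-ranks}, and the Brumer-Baker transcendence theorem on $p$-adic logarithms.

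The first step is to invoke Lemma~\ref{lemma:chevalley-} together with the asymptotic formulas from \cite{T-ranks} (alluded to in the paragraph preceding the theorem). Together these reduce the inequality $\rg_{\Z_p}((A')^-_\Gamma) \leq r - s$ to the existence of a constant $C \in \N$ such that
\[ [E_K' : N_n(K_n) \cap E_K'] \;\geq\; p^{sn - C} \cdot [E_{K^+}' : N_n(K_n^+) \cap E_{K^+}'] \qquad \text{for all } n \in \N. \]
Since $G$ is abelian, $K_n/K$ and $K_n^+/K^+$ are cyclic, so Hasse's norm theorem identifies the intersections with $E_K'$ (resp.\ $E_{K^+}'$) as the $p$-units which are local norms at every prime above $p$. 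The desired inequality thus reduces to a lower bound on the image of the natural norm residue map
\[ E_K'/E_{K^+}' \longrightarrow \bigoplus_{w \mid p} K_w^\times / N_{K_{n,w}/K_w}(K_{n,w}^\times). \]

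The key ingredient is the construction of $s$ explicit ``minus'' $p$-units coming from the totally split prime $\p$. Because $\p$ is totally split in $K$, the $s$ primes $\mathfrak{q}_1, \ldots, \mathfrak{q}_s$ of $K^+$ above $\p$ each split in $K/K^+$ into a conjugate pair $(\mathfrak{P}_i, \bar{\mathfrak{P}}_i)$, and $K_{\mathfrak{P}_i} = \Q_p$ for each $i$. Using finiteness of the $p$-part of $\Cl(K)$, I would choose a common integer $h \in \N$ and $p$-units $\alpha_1, \ldots, \alpha_s \in E_K'$ with divisors $(\alpha_i) = h \mathfrak{P}_i - h \bar{\mathfrak{P}}_i$. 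By construction, the $\alpha_i$ are multiplicatively independent modulo $E_{K^+}'$ (and even modulo roots of unity), because their divisors are $\Z$-linearly independent in the free abelian group on primes of $K$ above $p$. Now the Brumer-Baker transcendence theorem, applied to the $p$-adic logarithms of the $\alpha_i$ at the embeddings $\sigma_j \colon K \hookrightarrow \Q_p$ corresponding to $\mathfrak{P}_j$, produces an $s \times s$ matrix $M = (\log_p \sigma_j(\alpha_i))_{1 \le i,j \le s} \in \mathrm{Mat}_s(\Q_p)$ which exhibits a form of diagonal dominance: the diagonal entries involve logarithms of elements of $p$-adic valuation $h$ (arising from $\mathfrak{P}_i$ appearing with coefficient $h$ in $(\alpha_i)$), whereas the off-diagonal entries come from $p$-adic units. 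Combined with the $\overline{\Q}$-linear independence from Brumer-Baker, this yields non-singularity of $M$ over $\Q_p$. Finally, local class field theory identifies each factor $K_w^\times / N_{K_{n,w}/K_w}(K_{n,w}^\times)$ with $\Z/p^n$ via (essentially) the truncated $p$-adic logarithm, so non-singularity of $M$ transports to an image of $\langle \alpha_1, \ldots, \alpha_s \rangle$ of order at least $p^{sn-C}$ in $\bigoplus_{i=1}^s \Z/p^n$, giving the inequality required by step~one.

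The main obstacle is the rigorous passage from the $\overline{\Q}$-linear independence supplied by Brumer-Baker to the $\Q_p$-linear independence of the log matrix $M$, since logarithms of algebraic numbers are in general transcendental elements of $\Q_p$. The specific divisor structure $(\alpha_i) = h(\mathfrak{P}_i - \bar{\mathfrak{P}}_i)$ should resolve this, by reducing the determinant computation to a triangular situation (up to a $v_p$-analysis) in which the non-vanishing of $\det(M)$ becomes transparent. A secondary technical point is that the extra factor $p^{sn}$ must genuinely live in the quotient $[E_K':N_n \cap E_K']/[E_{K^+}':N_n \cap E_{K^+}']$; this follows from the $\alpha_i$ being non-trivial modulo $E_{K^+}'$, so they contribute to the numerator but not to the denominator. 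The remaining $r - s$ split primes above other primes of $R$ need not be controlled, and this exactly accounts for the permitted defect $r - s$ in the rank bound.
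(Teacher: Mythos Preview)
Your overall architecture is correct and matches the paper's: reduce via the Chevalley formula (Lemma~\ref{lemma:chevalley-}) and the asymptotics from \cite{T-ranks} to a norm-index inequality, then use Hasse's norm theorem to translate this into a statement about the $p$-adic closure of a rank-$s$ group of ``minus'' $p$-units supported above $\p$, and finally invoke Brumer--Baker. The gap is exactly where you flag it, and your proposed fix does not work.

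You want to show that the $s \times s$ matrix $M = (\log_p \sigma_j(\alpha_i))$ is non-singular over $\Q_p$. Your suggestion is that the divisor structure $(\alpha_i) = h(\mathfrak{P}_i - \bar{\mathfrak{P}}_i)$ makes $M$ ``diagonally dominant'' or ``triangular up to $v_p$-analysis''. But $\log_p(p) = 0$, so $\log_p(\sigma_j(\alpha_i))$ depends only on the local unit part of $\sigma_j(\alpha_i)$, and the extra factor $p^h$ on the diagonal is invisible to the logarithm. All entries of $M$ are therefore $p$-adic logarithms of local units, with no a~priori valuation asymmetry between diagonal and off-diagonal entries; there is no triangularity to exploit, and $\det(M)$ is a genuinely transcendental expression in these logarithms that Brumer--Baker alone cannot control.

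The paper resolves this by a group-ring argument going back to Greenberg. One observes that the set of coefficient vectors $(b_\sigma)_{\sigma \in G}$ for which $\sum_\sigma b_\sigma \log_p(\varphi_1(\sigma(\alpha))) = 0$ for all $\alpha$ in the minus $p$-unit group is an \emph{ideal} $I$ of the commutative group ring $\overline{\Q}_p[G]$ (this uses that the minus $p$-unit group is $G$-stable, hence that $G$ is abelian). A hypothetical $\Z_p$-linear dependence among the $\log_p \circ \varphi_i$ shows $I \supsetneq (1+\tau)\overline{\Q}_p[G]$. By Artin--Wedderburn, $I$ is generated by idempotents $\varepsilon_\chi$, so $I$ contains some $\varepsilon_\chi$ with $\chi$ odd. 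The coefficients $\chi(\sigma^{-1})$ of $\varepsilon_\chi$ are \emph{algebraic}, and this produces a $\overline{\Q}$-linear dependence among the $\log_p(\varphi_1(\sigma_i(\alpha)))$ for a suitably chosen $\alpha$, contradicting Brumer--Baker. In short, the passage from $\Q_p$-dependence to $\overline{\Q}$-dependence is not a valuation argument but a representation-theoretic one, and it is precisely here that hypothesis~(b) (abelian $G$) is used.
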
 
This is Theorem~C from the Introduction and we note that hypothesis (a) may also be stated in the following way: there exists a prime $\mathfrak{P} \in S_p(K)$ such that $K_{\mathfrak{P}} = \Q_p$. 

\begin{proof} 
	 We will use arguments from the proof of \cite[Proposition~3]{green-l-adic}. In all what follows, we denote by $H_0$ the group of $p$-units $\alpha$ of $K$ which are divisible only by powers of ideals of $K$ which lie above $\p$, and which satisfy $\tau(\alpha) = \alpha^{-1}$, where $\tau$ denotes the complex conjugation. Let $\p_1, \ldots, \p_s, \overline{\p_1}, \ldots, \overline{\p_s}$ be the primes of $K$ above $\p$. 
	 
	 Let $K_{\p_i}$ be the completion of $K$ at $\p_i$, and let $\varphi_i\colon K \longrightarrow K_{\p_i}$ be the canonical embedding. Note that $K_{\p_i} = \Q_p$ for every $i$, by our assumptions. We define a map 
	 $$\psi \colon H_0 \longrightarrow \mathcal{U} := \prod_{i=1}^s E_{K_{\p_i}} $$ 
	 by 
	 $$ \psi(\alpha) := (\psi_1(\alpha), \ldots, \psi_s(\alpha)), $$ 
	 where $\psi_i(\alpha) := \varphi_i(\alpha) \cdot p^{- v_{\p_i}(\alpha)}$ for every ${i \in \{1, \ldots, s\}}$. 
	 
	 Our proof is based on four claims. 
	 \begin{claim} \label{claim_1} 
	 	$\psi$ is injective. 
	 \end{claim}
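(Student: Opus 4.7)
The strategy is to reduce the injectivity of the global map $\psi$ to the injectivity of the individual embeddings $\varphi_i\colon K \hookrightarrow K_{\mathfrak{p}_i}$; the hypothesis $K_{\mathfrak{p}_i} = \Q_p$ is what makes this reduction work, and no transcendence input is needed at this stage.

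Concretely, I would take an $\alpha \in H_0$ in the kernel of $\psi$ and set $a_i := v_{\mathfrak{p}_i}(\alpha)$. Unwinding the definition, the condition $\psi_i(\alpha) = 1$ is nothing but the equality $\varphi_i(\alpha) = p^{a_i}$ inside $K_{\mathfrak{p}_i}$. Since $K_{\mathfrak{p}_i} = \Q_p$, the canonical embedding $\varphi_i$ restricts to the standard inclusion $\Q \hookrightarrow \Q_p$ on the rational subfield of $K$, and so the right hand side is nothing but $\varphi_i(p^{a_i})$. Injectivity of the field embedding $\varphi_i$ then forces the global identity $\alpha = p^{a_i}$ in $K$ itself.

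The final step is to combine these identities across $i \in \{1, \ldots, s\}$ and to exploit the minus condition defining $H_0$. The chain of equalities $\alpha = p^{a_1} = \cdots = p^{a_s}$ in $K$ forces all the exponents to coincide, so that $\alpha = p^a$ for a single integer $a$, and in particular $\alpha \in \Q$. Then $\tau(\alpha) = \alpha^{-1}$, combined with the fact that $\tau$ fixes $\Q$ pointwise, yields $p^a = p^{-a}$, hence $a = 0$ and $\alpha = 1$.

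I do not foresee a serious obstacle for this particular claim: the argument is essentially formal once the hypothesis $K_{\mathfrak{p}_i} = \Q_p$ is in place. The genuinely delicate ingredient, namely the Brumer/Baker theorem on the $p$-adic linear independence of logarithms of algebraic numbers, is to be expected only in the subsequent claims, where one will have to compare the $\Z_p$-rank of the image of $\psi$ with the rank of $H_0$ in order to control $\rg_{\Z_p}((A')^-_\Gamma)$.
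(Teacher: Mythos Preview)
Your proposal is correct and follows essentially the same approach as the paper: from $\psi_i(\alpha)=1$ deduce $\varphi_i(\alpha)=p^{a_i}$, use that $p^{a_i}$ already lies in $K$ together with the injectivity of $\varphi_i$ to conclude $\alpha=p^{a_i}$ in $K$, and then invoke the condition $\tau(\alpha)=\alpha^{-1}$ to force $\alpha=1$. The only cosmetic difference is that the paper works with a single index $i=1$, whereas you run through all $i$ before collapsing the exponents; this extra step is harmless but unnecessary.
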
 
	 \begin{proof} 
	 	If $\psi(\alpha) = 1$, then $\varphi_i(\alpha) = p^{v_{\p_i}(\alpha)}$ for each $i$. We consider $i = 1$. Since both elements $\varphi_1(\alpha)$ and $p^{v_{\p_1}(\alpha)}$ are contained in $K$, and as $\varphi_1$ is injective, it follows that $\alpha = p^{v_{\p_1}(\alpha)}$ in $K$. But $\tau(\alpha) = \alpha^{-1}$, and therefore $v_{\p_1}(\alpha) = 0$ and $\alpha = 1$. 
	 \end{proof} 
	 
	 By Hasse's Norm Theorem, an element $\alpha \in K$ is a norm from the $n$-th layer $K_n$ of the cyclotomic $\Z_p$-extension of $K$ if and only if it is a local norm at each prime dividing $p$. The element $\alpha$ is a local norm in $K_{\p_i}$ if and only if $\psi_i(\alpha) \in E_{K_{\p_i}}^{p^n}$. In particular, $\psi(N_n(K_n)) \subseteq \mathcal{U}^{p^n}$. Therefore 
	 \begin{align} 
	    [(H_0 \cap N_n(K_n)): H_0^{p^n}] \le [(\psi(H_0) \cap \mathcal{U}^{p^n}): \psi(H_0)^{p^n}] .  \label{noname} 
	 \end{align} 
	
	We let $H := \psi(H_0)$, and we denote by $\overline{H}$ the $p$-adic closure of $H$ in $\mathcal{U}$. Recall that $H_0$ is a $\Z$-module of rank $s = \frac{|G|}{2}$. 
	\begin{claim} \label{claim_2} 
		$\rg_{\Z_p}(\overline{H}) = s$. 
	\end{claim}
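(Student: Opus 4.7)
The strategy is to convert $\rg_{\Z_p}(\overline H) = s$ into the nonvanishing of an $s \times s$ $p$-adic regulator determinant, and then to descend to a statement about algebraic numbers via Brumer's $p$-adic analogue of Baker's theorem, using the abelian Galois symmetry. First, I would choose a $\Z$-basis $\alpha_1, \ldots, \alpha_s$ of the torsion-free rank-$s$ module $H_0$. Since $\mathcal U \cong (\Z_p^\times)^s$ has $\Z_p$-rank $s$, the bound $\rg_{\Z_p}(\overline H) \le s$ is immediate, and equality is equivalent (applying $\log_p$ coordinatewise) to the $\Q_p$-linear independence of the $s$ vectors $L_j := (\log_p \psi_i(\alpha_j))_i \in \Q_p^s$, i.e.\ to $\det M \ne 0$ for the matrix $M = (\log_p \psi_i(\alpha_j))_{1\le i,j\le s}$.

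Next, I would exploit abelianness together with total splitness. The group $G$ acts simply transitively on the $2s$ primes of $K$ above $\p$, so after fixing $\p_1$ every such prime equals $g(\p_1)$ for a unique $g\in G$; set $\varphi_g := \varphi_1\circ g^{-1}\colon K \to \Q_p$. The minus condition $\tau\alpha_j = \alpha_j^{-1}$ gives $\log_p \varphi_{\tau g}(\alpha_j) = -\log_p \varphi_g(\alpha_j)$, so the rank of $M$ equals the rank of the enlarged $|G|\times s$ matrix $\tilde M = (\log_p \varphi_g(\alpha_j))_{g\in G,\,j}$. Assuming for contradiction $\det M = 0$, we obtain $c_j \in \Q_p$, not all zero, with $\sum_j c_j \log_p \varphi_1(g\alpha_j) = 0$ for every $g\in G$. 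Letting $v = \sum_j c_j \alpha_j \in H_0 \otimes \Q_p$ and $\tilde\Phi$ be the $\Q_p$-linear extension of $\alpha \mapsto \log_p \varphi_1(\alpha)$, this reads $\tilde\Phi(gv) = 0$ for every $g\in G$. Decomposing $H_0 \otimes \overline{\Q}_p$ into the one-dimensional eigenspaces of the $s$ odd characters of $G$ and invoking the linear independence of the characters of $G$, the system forces $\tilde\Phi(\varepsilon_\chi v) = 0$ for each odd $\chi$.

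To derive a contradiction, I would pick $\alpha_0 \in H_0$ whose $G$-orbit spans $H_0 \otimes \Q$; such an $\alpha_0$ exists because $H_0 \otimes \Q$ is a cyclic $\Q[G]^-$-module (every odd $\chi$-component is one-dimensional, matching the decomposition of $\Q[G]^-$), so after rescaling we may assume $\varepsilon_\chi \alpha_0 \ne 0$ is a generator of the $\chi$-eigenspace. Expanding $\tilde\Phi(\varepsilon_\chi \alpha_0)$ using the definition of $\varepsilon_\chi$, pairing $\sigma \in G$ with $\sigma\tau \in G$, and using $\chi(\tau) = -1$ and $\tau\alpha_0 = \alpha_0^{-1}$, the relation $\tilde\Phi(\varepsilon_\chi \alpha_0) = 0$ collapses to
\[
\sum_{\sigma\in T} \chi(\sigma)\, \log_p \varphi_1(\sigma^{-1}\alpha_0) = 0,
\]
where $T$ is a set of $s$ coset representatives of $\langle\tau\rangle\backslash G$. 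The $s$ elements $\sigma^{-1}\alpha_0$ ($\sigma \in T$) are multiplicatively independent in $K^\times$ modulo torsion by the choice of $\alpha_0$, so Brumer's $p$-adic analogue of Baker's theorem forces their $p$-adic logarithms to be $\overline\Q$-linearly independent, contradicting the fact that the $\chi(\sigma)$ are nonzero roots of unity.

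The crux, and main obstacle, is precisely this descent from a $\Q_p$-linear dependence (a very weak hypothesis, since $\Q_p$ is enormous) to an $\overline\Q$-linear dependence, where Brumer–Baker applies; the bridge is the abelian character decomposition, which replaces a single $\Q_p$-relation by a system of relations indexed by $G$ and projects each component into the algebraic regime. This is the same mechanism that Greenberg uses in \cite[Prop.~3]{green-l-adic} for $R = \Q$: the hypothesis $R_\p = \Q_p$ ensures $\psi_i(\alpha) \in \Z_p^\times \cap \overline\Q$ so that Brumer can be applied to the local images, and the abelianness of $G$ is what permits the character-by-character reduction, so his argument goes through essentially verbatim in the present setting.
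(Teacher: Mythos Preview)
Your argument is correct and is essentially Greenberg's method, the same one the paper uses. The only difference is a dual packaging of the linear-algebra step: from $\rg_{\Z_p}(\overline H)<s$ the paper extracts a relation among the \emph{embeddings} (coefficients $a_1,\dots,a_s\in\Z_p$ with $\sum_i a_i\log_p\varphi_i(\alpha)=0$ for all $\alpha\in H_0$), interprets the space of such relations as an ideal $I\subseteq\overline{\Q}_p[G]$ strictly containing $(1+\tau)\overline{\Q}_p[G]$, and concludes that $I$ contains some odd idempotent $\varepsilon_\chi$; you instead extract a relation among the \emph{generators} (a nonzero $v\in H_0\otimes\Q_p$ annihilated by every $\log_p\varphi_i$) and project $v$ to a character eigenspace. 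Both routes land on the same algebraic relation $\sum_\sigma\chi(\sigma)\log_p\varphi_1(\sigma^{\pm1}\alpha)=0$, and the paper's special element $\alpha=\tilde\alpha/\tau(\tilde\alpha)$ (with $\tilde\alpha$ supported only at $\p_1$) is precisely a cyclic generator in your sense. One point you should make explicit: the passage from $\tilde\Phi(\varepsilon_\chi v)=0$ (which you prove for \emph{all} odd $\chi$) to $\tilde\Phi(\varepsilon_\chi\alpha_0)=0$ requires first choosing a specific odd $\chi$ with $\varepsilon_\chi v\neq 0$ (such $\chi$ exists since $0\neq v$ lies in the minus part) and then using the one-dimensionality of that eigenspace to write $\varepsilon_\chi v=c\cdot\varepsilon_\chi\alpha_0$ with $c\neq 0$; as written this step is left implicit.
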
 
	\begin{proof} 
		Consider the matrix $A = (\psi_i(\alpha))_{\alpha \in H_0, i \in \{1, \ldots, s\}}$. If $\rg_{\Z_p}(\overline{H}) < s$, then the dimension of the row space of $A$ is at most $s-1$. This means that the dimension of the kernel of the corresponding $\Z_p$-linear map $f\colon \Z_p^s \longrightarrow \mathcal{U}$ is at least one. In other words, there exist $a_1, \ldots, a_s \in \Z_p$, not all zero, such that 
		\begin{align*} \prod_{i = 1}^s \psi_i(\alpha)^{a_i} = 1 \quad \text{ for each } \; \alpha \in H_0. \end{align*} 
		Taking the logarithm yields 
		\begin{align} \label{extra-gleichung} \sum_{i = 1}^s a_i \log_p(\varphi_i(\alpha)) = 0 \quad \text{ for each $\alpha \in H_0$}, \end{align} 
		since $\log_p(p) = 0$. 
		
		Recall that we have fixed a prime $\p$ of $R$ which is totally split in $K$; in particular, fixing a prime $\p_1$ of $K$ above $\p$, the $2s = |G|$ primes of $K$ dividing $\p$ can be written as 
		$$ \{ \p_1, \ldots, \p_s, \overline{\p_1}, \ldots, \overline{\p_s}\} = 
		   \{ \p_1 = \sigma_1(\p_1), \ldots, \sigma_s(\p_1), \sigma_{s+1}(\p_1), \ldots, \sigma_{2s}(\p_1) \}. $$ 
		Therefore the embeddings $\varphi_i\colon K \longrightarrow K_{\p_i}$ can be written as $\varphi_i = \varphi_1 \circ \sigma_i$ for each $i \in \{1, \ldots, s\}$. 
		
		Now we consider the ideal  
	    $$ I := \left\{ \sum_{i = 1}^{2s} b_i \sigma_i \in R \; \Big| \; \sum_{i = 1}^{2s} b_i \log_p(\varphi_1(\sigma_i(\alpha))) = 0 \textup{ for all $\alpha \in H_0$} \right\} $$ 
	    in the commutative group ring $R = \overline{\Q}_p[G]$. The ideal $I$ contains the elements $$\sigma_i + \sigma_{s+i} = (1 + \tau) \sigma_i$$ 
	    for all $1 \le i \le s$, because $\tau(\alpha) = \alpha^{-1}$ for all $\alpha \in H_0$ (recall that $\tau$ denotes complex conjugation). Therefore $I$ contains $(1 + \tau )R = R^+$. But $I\setminus R^+ \ne \emptyset$ in view of \eqref{extra-gleichung}. 
	
	It follows from \cite[Theorem~33.8]{CuRe62} that $I$ is generated by idempotents $\varepsilon_\chi \in R$, $\chi \in \Ir_p(G)$, which are of the form 
	$$\varepsilon_\chi = \frac{1}{|G|} \cdot \sum_{g \in G} \chi(g) g^{-1}.$$ Since $I$ is strictly larger than $R^+$, we may assume that $\chi(\tau) = - \chi(1)$ for some $\chi$. This means that $I$ contains an element $\sum_{i=1}^{2s} \chi(\sigma_i^{-1}) \sigma_i$ such that 
	\begin{align} \label{eq_contradiction} 
	\sum_{i = 1}^{2s} \chi(\sigma_i^{-1}) \log_p(\varphi_{1}(\sigma_i(\alpha))) = \sum_{i=1}^s 2 \chi(\sigma_i^{-1}) \log_p(\varphi_{1}(\sigma_i(\alpha))) = 0 \quad \text{for all } \alpha \in H_0. 
	\end{align} 
	Let $\tilde{\alpha} \in E_K'$ be divisible by the prime $\p_1$, and by no other prime. We define $\alpha := \tilde{\alpha}/\tau(\tilde{\alpha})$. Then ${\alpha \in H_0}$, and 
	$$\sigma_1(\alpha), \ldots, \sigma_s(\alpha) \in H_0$$ 
	are multiplicatively independent, since the ${\sigma_i(\p_1) = \p_i}$ are just the conjugates of $\p_1$. Therefore $\log_p(\varphi_{1}(\sigma_1(\alpha))), \ldots, \log_p(\varphi_{1}(\sigma_s(\alpha)))$ are linearly independent over $\Q$. By the theorem \cite{Bru67} of Brumer, these logarithms are still linearly independent over some algebraic closure of $\Q$ in $\overline{\Q}_p$. But this contradicts \eqref{eq_contradiction} (note that the values of $\chi$ are algebraic over $\Q$). 
	\end{proof} 
	
	\begin{claim} \label{claim_3} 
		$[(H \cap \mathcal{U}^{p^n}): H^{p^n}]$ remains bounded as $n \to \infty$. 
	\end{claim}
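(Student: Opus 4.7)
The plan is to compare $H$ with its $p$-adic closure $\overline{H}$ in $\mathcal{U}$ (which by Claim~\ref{claim_2} has $\Z_p$-rank $s$) and then to carry out an elementary-divisor calculation.

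First I would use a purely topological observation. Since $\mathcal{U}/\mathcal{U}^{p^n}$ is a finite discrete group and $H$ is dense in $\overline{H}$, the images of $H$ and of $\overline{H}$ in $\mathcal{U}/\mathcal{U}^{p^n}$ must coincide; equivalently,
$$[H : H \cap \mathcal{U}^{p^n}] \;=\; [\overline{H} : \overline{H} \cap \mathcal{U}^{p^n}].$$
Together with the obvious chain $H^{p^n} \subseteq H \cap \mathcal{U}^{p^n} \subseteq H$, this reduces the claim to proving that the ratio $[H : H^{p^n}] / [\overline{H} : \overline{H} \cap \mathcal{U}^{p^n}]$ stays bounded as $n \to \infty$.

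Next I would invoke the explicit description $E_{K_{\p_i}} = \Z_p^\times \cong \mu_{p-1} \times \Z_p$: since $p \neq 2$, the torsion subgroups $\mathcal{U}_{\mathrm{tors}}$ and $\overline{H}_{\mathrm{tors}} \subseteq \mathcal{U}_{\mathrm{tors}}$ both have order coprime to $p$, and in particular $\mathcal{U}_{\mathrm{tors}} \subseteq \mathcal{U}^{p^n}$ for every $n$. Modding out torsion, the inclusion $\overline{H}/\overline{H}_{\mathrm{tors}} \hookrightarrow \mathcal{U}/\mathcal{U}_{\mathrm{tors}}$ is an inclusion of free $\Z_p$-modules of equal rank $s$ (by Claim~\ref{claim_2}), and the elementary-divisor theorem furnishes integers $a_1, \ldots, a_s \geq 0$ together with a $\Z_p$-basis $e_1, \ldots, e_s$ of $\mathcal{U}/\mathcal{U}_{\mathrm{tors}}$ such that $\overline{H}/\overline{H}_{\mathrm{tors}} = \bigoplus_i p^{a_i} \Z_p e_i$. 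A direct calculation then yields
$$[\overline{H} : \overline{H} \cap \mathcal{U}^{p^n}] \;=\; \prod_{i=1}^s p^{\max(0,\, n - a_i)}.$$
On the other side, $H = \psi(H_0)$ is a finitely generated abelian group of $\Z$-rank $s$ whose torsion is contained in $\mu(K)$, so $[H : H^{p^n}] = p^{ns} \cdot c_n$ with $c_n \leq |\mu(K)|$.

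Combining both estimates and using the elementary inequality $ns \leq \sum_i a_i + \sum_i \max(0,\, n - a_i)$, one obtains
$$[(H \cap \mathcal{U}^{p^n}) : H^{p^n}] \;\leq\; p^{\,a_1 + \cdots + a_s} \cdot |\mu(K)|,$$
a bound independent of $n$. Beyond the input from Claim~\ref{claim_2}, no genuine technical obstacle is expected; however, the full $\Z_p$-rank of $\overline{H}$ is crucial — without it the key ratio $[H : H^{p^n}] / [\overline{H} : \overline{H} \cap \mathcal{U}^{p^n}]$ would grow without bound, so Claim~\ref{claim_2} really carries the weight of the argument.
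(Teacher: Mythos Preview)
Your argument is correct and in fact yields an explicit bound, namely $p^{a_1+\cdots+a_s}\cdot|\mu(K)|$, that the paper's proof does not provide. One small remark: since $K_{\p_i}=\Q_p$ for each $i$, the torsion of $\mathcal{U}$ has order prime to $p$, so in fact $\zeta_p\notin K$ and $c_n=1$; your weaker estimate $c_n\le|\mu(K)|$ is of course sufficient.

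The paper takes a different route. Instead of the elementary-divisor calculation, it argues by contradiction via a compactness-type limit: assuming the indices are unbounded, one finds a compatible sequence of elements $z_n\in H\cap\mathcal{U}^{p^n}$ whose orders modulo $H^{p^n}$ tend to infinity, writes $z_n=\varepsilon_1^{b_1^{(n)}}\cdots\varepsilon_s^{b_s^{(n)}}$ in terms of a fixed $\Z$-basis of (the free part of) $H$, and passes to the $p$-adic limit $a_i=\lim_n b_i^{(n)}$. The resulting element $z=\varepsilon_1^{a_1}\cdots\varepsilon_s^{a_s}\in\overline{H}$ lies in $\bigcap_n\mathcal{U}^{p^n}=\{1\}$, producing a non-trivial $\Z_p$-relation among the $\varepsilon_i$ and contradicting Claim~\ref{claim_2}.

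Both approaches rest on Claim~\ref{claim_2} in exactly the way you identify. Your structural argument is cleaner and quantitative; the paper's argument is closer in spirit to the classical Leopoldt-type reasoning (extracting a relation from a failure of discreteness) and makes more transparent \emph{why} a drop in $\Z_p$-rank would be the only obstruction.
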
 
	\begin{proof} 
		Suppose that the sequence of indices is unbounded. Since the quotient groups $(H \cap \mathcal{U}^{p^n})/H^{p^n}$ have bounded $\Z$-rank ($\le s$), there exists a sequence $(z_n)_n$ of elements in $H$ such that $z_n \in \mathcal{U}^{p^n}$ and such that the order of the coset $\overline{z_n}$ of $z_n$ in ${(H \cap \mathcal{U}^{p^n})/H^{p^n}}$ tends to infinity as $n \to \infty$. We may assume that $\overline{z_{n+1}} \mapsto \overline{z_n}$ under the canonical map 
		$$ (H \cap \mathcal{U}^{p^{n+1}})/H^{p^{n+1}} \twoheadrightarrow (H \cap \mathcal{U}^{p^n})/H^{p^n}. $$ 
		Write $z_n = \varepsilon_1^{b_1^{(n)}} \cdot \ldots \cdot \varepsilon_s^{b_s^{(n)}}$ for suitable elements $b_1^{(n)}, \ldots, b_s^{(n)} \in \Z$ and fixed $\varepsilon_1, \ldots, \varepsilon_s \in H$. Since the order of $\overline{z_n}$ tends to infinity, and as the exponent of the torsion subgroup of $\mathcal{U}$ is finite and $z_n \in \mathcal{U}^{p^n}$ for each $n \in \N$, we may assume that $\varepsilon_1, \ldots, \varepsilon_s \in H$ are $\Z$-linearly independent. 
		
		Since $\overline{z_{n+1}} \mapsto \overline{z_n}$, we have $b_i^{(n+1)} \equiv b_i^{(n)} \pmod{p^n}$ for each $i \in \{1, \ldots, s\}$ and every $n \in \N$, i.e. we can define $a_i := \lim_{n \to \infty} b_i^{(n)} \in \Z_p$, $1 \le i \le s$. Then the element $z := \varepsilon_1^{a_1} \cdot \ldots \cdot \varepsilon_s^{a_s} \in \overline{H}$ is contained in $\bigcap_n \mathcal{U}^{p^n}$, i.e. $z = 1$. Therefore $\rg_{\Z_p}(\overline{H}) < \rg_{\Z}(H)$, yielding a contradiction to the previous Claim \ref{claim_2}. 
	\end{proof} 
	For the last step of the proof, we recall that $A' = \varprojlim_n A_n'$ is a $\Z_p\llbracket\Gamma\rrbracket$-module, and $\Z_p\llbracket\Gamma\rrbracket \cong \Z_p\llbracket T \rrbracket$, where $T$ corresponds to $\gamma - 1$ for some topological generator $\gamma$ of $\Gamma = \Gal(K_\infty/K)$. Note that $\GKC^-(K)$ can be written as follows: 
	$$ \rg_{\Z_p}((A')^-/(T \cdot (A')^-)) = 0. $$ 
	Now we can put the statements of the above claims together in order to prove Theorem~\ref{thm:weihnacht}. The last step is the following 
	\begin{claim} \label{claim_4} 
		In the above situation, we have 
		$$ \rg_{\Z_p}((A')^-/(T \cdot (A')^-)) \le r - s. $$ 
	\end{claim}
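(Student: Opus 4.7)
The plan is to combine Claims~\ref{claim_1}--\ref{claim_3} with the Chevalley formula \eqref{chevalley} derived in the proof of Lemma~\ref{lemma:chevalley-} in order to bound the $p$-adic growth of $|((A_n')^-)^{\Gamma_n}|$, and then to translate this asymptotic bound into a bound on $\rg_{\Z_p}((A')^-_\Gamma)$ by the same Iwasawa-theoretic mechanism (via \cite[Theorem~2.1]{T-ranks}) that was already invoked in the proof of Lemma~\ref{lemma:chevalley-}.

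First I would extract from inequality \eqref{noname} together with Claim~\ref{claim_3} that the index $[(H_0 \cap N_n(K_n)) : H_0^{p^n}]$ is bounded independently of $n$. Since $\rg_\Z H_0 = s$ forces $[H_0 : H_0^{p^n}]$ to equal $p^{sn}$ up to a bounded factor, this yields
\[ v_p\bigl([H_0 : H_0 \cap N_n(K_n)]\bigr) \ge sn - O(1), \]
and hence, via the inclusion $H_0 \hookrightarrow E_K'$, the same lower bound for $v_p([E_K' : N_n(K_n) \cap E_K'])$.

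The crucial next step is to verify that this lower bound \emph{survives inside the Chevalley ratio}, i.e.\ that
\[ v_p\!\left( \frac{[E_K' : N_n(K_n) \cap E_K']}{[E_{K^+}' : N_n(K_n^+) \cap E_{K^+}']} \right) \ge sn - O(1). \]
This is the heart of the argument. Because $H_0 \subseteq (E_K')^-$ and $p$ is odd, the idempotents $(1 \pm \tau)/2$ split $E_K' \otimes \Z_p$ cleanly as a direct sum of its $\pm$-eigenspaces, and a short Hasse-principle argument using the compatibility $N_{K_n/K}|_{K^+} = N_{K_n^+/K^+}$ shows that the plus-part of the numerator differs only by a bounded (power of $2$) factor from the denominator. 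Consequently the entire $p^{sn}$ contribution coming from $H_0$ is concentrated in the minus part of the numerator and survives in the ratio. Plugging the resulting inequality into \eqref{chevalley} then gives
\[ v_p(|((A_n')^-)^{\Gamma_n}|) \le (r - s)\, n + O(1). \]

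To conclude, I would invoke \cite[Theorem~2.1]{T-ranks}: since $(A')^-$ carries no nontrivial finite $\Lambda$-submodule by Proposition~\ref{prop_FeGr81_Prop6.1}(b), the asymptotic $p$-adic growth of $|((A_n')^-)^{\Gamma_n}|$ is controlled by $\rg_{\Z_p}((A')^-_\Gamma)$, and the linear bound $(r-s) n + O(1)$ translates directly into $\rg_{\Z_p}((A')^-/T(A')^-) \le r - s$. I expect the main obstacle to be the plus/minus bookkeeping in the second step: a naive appeal to the lower bound on $[E_K' : N_n(K_n) \cap E_K']$ gives no control on the denominator in the Chevalley ratio, and the argument only works because the whole contribution from $H_0$ lies in $(E_K')^-$ and therefore decouples from $E_{K^+}'$ at odd $p$.
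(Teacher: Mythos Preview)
Your proposal is correct and follows essentially the same route as the paper's own proof of Claim~\ref{claim_4}: bound $[H_0:H_0\cap N_n(K_n)]$ from below via Claims~\ref{claim_1}--\ref{claim_3}, use $H_0\subseteq (E_K')^-$ to feed this into the Chevalley ratio~\eqref{chevalley}, and then pass from the resulting growth bound on $|((A_n')^-)^{\Gamma_n}|$ to the rank inequality via \cite{T-ranks}. The only cosmetic difference is in this last passage, where the paper formulates it as a per-layer increment $\rg_T((A'_{m+1})^-)\le (r-s)+\rg_T((A'_m)^-)$ and appeals to \cite[Theorem~2.2 and Remark~2.4]{T-ranks} rather than to \cite[Theorem~2.1]{T-ranks}; the underlying mechanism is identical.
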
 
	\begin{proof} 
		Since $\rg_{\Z}(H_0) = s$, it follows from Claim \ref{claim_3} and inequality \eqref{noname} that 
		$$ [H_0: (H_0 \cap N_n(K_n))] \ge \frac{[ H_0: H_0^{p^n}]}{p^C} \ge p^{n \cdot s - C} $$ 
		for each $n \in \N$ and some fixed constant $C$. Since $H_0$ contains only $p$-units $\alpha$ such that $\tau(\alpha) = \alpha^{-1}$, we may conclude that 
		$$ [E_K': (N_n(K_n) \cap E_K')] \ge p^{s \cdot n - C} \cdot [E_{K^+}': (N_n(K_n^+) \cap E_{K^+}')] $$ 
		for every $n \in \N$. 
		
		The Theorem of Chevalley (in the version \eqref{chevalley} used in the proof of Lemma \ref{lemma:chevalley-}) implies that 
		$$ \rg_T((A'_{m+1})^-) \le r - s + \rg_T((A_m')^-) $$ 
		for all sufficiently large $m \in \N$ (recall that $r$ equals the number of primes of $K^+$ dividing $p$ which split in $K$). We may assume that $m$ is large enough such that all the primes of $K_m$ dividing $p$ are totally ramified in $K_\infty$. Therefore we may deduce from \cite[Theorem~2.2]{T-ranks} that 
		$$ \rg_T((A_n')^-) - \rg_T((A_m')^-) \le (n - m )\cdot (r-s)$$ 
		for every $n \ge m$. This implies that $\rg_{\Z_p}((A')^-/(T \cdot (A')^-)) \le r - s$ (cf. also \cite[Remark~2.4]{T-ranks}). 
	\end{proof} 
	This also finishes the proof of Theorem~\ref{thm:weihnacht}. 
\end{proof}

\begin{cor} 
\label{cor_GVC}
  Let $K$ be a finite abelian CM-extension of a totally real number field $R$. Let $\p_1, \ldots, \p_t$ be the primes of $R$ dividing $p$, let $G = \Gal(K/R)$, and write $|G| = n = 2^k \cdot m$ with $k \ge 1$ and $m$ odd. Suppose that \begin{compactenum}[(a)] 
  \item $\p_1$ is totally split in $K/R$ and satisfies $R_{\p_1} = \Q_p$, and that 
  \item the primes of $K^+$ dividing $\p_2, \ldots, \p_t$ are unsplit in $K$. 
  \end{compactenum} Then $\GKC^-(K)$ holds. If moreover $K \cap R_\infty = R$ (e.g. since $p \nmid |G|)$, then $\GVC(K/R, \chi)$ holds for each ${\chi \in \Ir^-(G)}$. More generally, under this assumption we may derive $\GVC(K/\tilde{R}, \chi)$ for every totally real field $\tilde{R}$ satisfying ${R \subseteq \tilde{R} \subseteq K}$ and each $\chi \in \Ir^-(\Gal(K/\tilde{R}))$, respectively. 
\end{cor}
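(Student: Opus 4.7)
The plan is to combine Theorem~\ref{thm:weihnacht} with Corollary~\ref{cor:equivalence}. Hypothesis~(a) provides precisely the input required by Theorem~\ref{thm:weihnacht}: a prime $\p_1$ of $R$ above $p$ which is totally split in $K$ and satisfies $R_{\p_1} = \Q_p$. Together with the abelianness of $G$, this yields the bound $\rg_{\Z_p}((A')^-_\Gamma) \le r - s$, where $r$ denotes the number of primes of $K^+$ above $p$ which split in $K$ and $[K:R] = 2s$. The strategy is to use hypotheses~(a) and~(b) to force $r = s$, so that $((A')^-)_\Gamma$ has $\Z_p$-rank zero and hence, being a finitely generated $\Z_p$-module, is finite. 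This is $\GKC^-(K)$.

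The counting is immediate: since $\p_1$ is totally split in $K/R$ it is in particular totally split in $K^+/R$, producing exactly $[K^+ : R] = s$ primes of $K^+$ above $\p_1$, each of which splits further in the quadratic extension $K/K^+$. By hypothesis~(b), no prime of $K^+$ above $\p_2, \ldots, \p_t$ contributes to $r$. Hence $r = s$, and $\GKC^-(K)$ follows.

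For the $\GVC$ assertion I would invoke Corollary~\ref{cor:equivalence} for the extension $K/\tilde R$, the case $\tilde R = R$ giving the first claim. The only point left to verify is that $K \cap \tilde R_\infty = \tilde R$ for every totally real $\tilde R$ with $R \subseteq \tilde R \subseteq K$. Since $\tilde R/R$ is finite, $\tilde R_\infty = \tilde R \cdot R_\infty$ is the cyclotomic $\Z_p$-extension of $\tilde R$, and any intermediate field $\tilde R \subseteq F \subseteq K \cap \tilde R_\infty$ must be of the form $\tilde R_n$ for some $n \in \N_0$. If $n \ge 1$, then $R_n \subseteq \tilde R_n \subseteq K$, and combined with $R_n \subseteq R_\infty$ this gives $R_n \subseteq K \cap R_\infty = R$, contradicting $[R_n : R] = p^n > 1$. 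Thus $n = 0$, and Corollary~\ref{cor:equivalence} delivers $\GVC(K/\tilde R, \chi)$ for every such $\tilde R$ and every $\chi \in \Ir_p^-(\Gal(K/\tilde R))$.

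The bulk of the work is absorbed into Theorem~\ref{thm:weihnacht}; the genuine content here is just the observation that hypotheses~(a) and~(b) are calibrated so as to make the bound of Theorem~\ref{thm:weihnacht} an equality $r = s$, together with the elementary descent showing $K \cap \tilde R_\infty = \tilde R$. I do not expect any serious obstacle.
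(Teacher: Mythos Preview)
Your proposal is correct and follows essentially the same approach as the paper: show $r=s$ from hypotheses~(a) and~(b), apply Theorem~\ref{thm:weihnacht} to get $\GKC^-(K)$, then invoke Corollary~\ref{cor:equivalence}. Your write-up is in fact more explicit than the paper's on two points---the counting that gives $r=s$, and the verification that $K \cap \tilde R_\infty = \tilde R$ follows from $K \cap R_\infty = R$---both of which the paper leaves to the reader.
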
 
\begin{proof} 
  Using the notation of Theorem~\ref{thm:weihnacht}, we have $r = s$ by our assumptions. The further statements follow from Corollary~\ref{cor:equivalence} because $\tilde{R}_\chi \subseteq K$ for every $\tilde{R} \subseteq K$ and each $\chi \in \Ir^-(\Gal(K/\tilde{R}))$. 
\end{proof} 

\begin{rem} \label{rem:final} 
  Condition (b) from the above corollary is satisfied, for example, if \begin{compactitem} 
    \item[(i)] the numbers of primes of $K$ above $\p_2, \ldots, \p_t$ are not divisible by 2, or if 
    \item[(ii)] the numbers of primes of $K$ above $\p_2, \ldots, \p_t$ are not divisible by $2^k = \frac{n}{m}$, and the $2$-Sylow subgroup of $G = \Gal(K/R)$ is cyclic (indeed, in this case there exists exactly one element of $G$ of order 2, and therefore our assumption implies that the decomposition fields of $\p_2, \ldots, \p_t$ in $K$ are contained in the unique intermediate field of degree $\frac{n}{2}$ over $R$, which is $K^+$). 
  \end{compactitem} 
\end{rem}

\begin{rem} \label{rem:example} $\;$ \\ \vspace{-3mm} \begin{compactenum}[(1)] 
   \item In the situation of Corollary \ref{cor_GVC}, each character $\chi \in \Ir^-(\Gal(K/R))$ satisfies $r_{S, \chi} = 1$ so these cases can already be derived from the literature.
   \item On the other hand, by using the approach described in Remark \ref{rem:hochgehen}, we can use the above corollary for producing examples of $\GVC(K/\tilde{R}, \chi)$ with large vanishing orders. Indeed, if $\tilde{R}$ is a field as in Corollary~\ref{cor_GVC}, then the $p$-adic $L$-functions attached to characters in $\Ir^-(\Gal(K/\tilde{R}))$ will have vanishing orders at least $[\tilde{R}:R]$, because $\p_1$ is totally split in $K/R$. 
\end{compactenum} 
\end{rem} 

\begin{example}
\label{ex_largevanish}
In order to show that the conditions of Corollary~\ref{cor_GVC} are easily realisable we computed some examples. The totally real subfield $R$ is always non-normal over $\Q$. We define the modulus $\frm_{\infty}$ to be the product of all the infinite places of $R$ and  $\frm_{\mathrm{fin}}$ to be a suitable product of finite primes. Then we set $K=R(\frm_{\infty} \frm_{\mathrm{fin}})$ and we search for moduli $\frm_{\mathrm{fin}}$ such that the hypothesis from Corollary~\ref{cor_GVC} are satisfied for $K$. Below we give a list of some of the examples we found with absolute degree $[K:\Q]$ larger than $10$. Here we identify a monic polynomial ${f(X)= X
^n + \sum_{i=0}^{n-1} a_{i} X^{i} \in \Z[X]}$ with the vector $[a_{0}, a_{1}, \ldots , a_{n-1}]$, and we write $\frp_{q}$ for a prime ideal of $\Ok_{R}$ above $q$. In order to keep the list in a concise form we do not specify which of the prime ideals above $q$ we used. 

In the last column of the table, we write down the degree of the maximal totally real subfield $K^+$ of $K$
over $R$, which marks a lower bound for $r_{S, \chi}$ for the non-trivial character $\chi$ of $\Gal(K/K^+)$, in the sense of Remark~\ref{rem:example},(2). 
\begin{table}[h]
    \centering
    \begin{tabular}{r|r|c|c|c}
    Prime & Def. poly. of $R$ & $\frm_{\mathrm{fin}}$ & Degree of $K$ over $\Q$ & $r_{S, \chi} \ge $ \\
    \hline
    \hline
      $2$ &  $[-12,-26,0]$  &  $\frp_{79}$ & $18$ & 3 \\
      $5$ & $[-13,-20,-1]$ & $\frp_{11}$ & $30$ & 5 \\
      $11$ & $[87,-39,-1]$ & $\frp_{61}$ & $60$ & 10 \\
      \hline
      $7$ & $[7,5,-6,-2]$ & $\frp_{37}$ & $16$ & 2 \\
      $23$ & $[4,8,-9,-2]$ & $\frp_{31}$ & $24$ & 3
    \end{tabular}
    \label{tab_ex_cor} 
\end{table}
\end{example} 

Finally we produce an example where $K/R$ is non-abelian. Although the corresponding character $\chi$ satisfies $r_{S, \chi} = 1$ and therefore we do not obtain new, previously unknown cases, this theorem nicely illustrates possible further applications of the rank inequality from Theorem~\ref{thm:weihnacht}.  
\begin{thm} \label{lemma:final} 
  Let $K$ be a normal extension of a totally real number field $R$. Let $\p_1, \ldots, \p_t$ be the primes of $R$ dividing $p$. We assume that \begin{compactenum}[(a)] 
    \item $\p_1$ is totally split in $K$ and $R_{\p_1} = \Q_p$, 
    \item the primes of $K^+$ dividing $\p_2, \ldots, \p_t$ do not split in $K$, 
    \item $G := \Gal(K/R)$ is isomorphic to the dihedral group $D_{n}$ with $2n$ elements, where $n \equiv 2 \pmod{4}$, and 
  \end{compactenum} 
  Then $\GKC(K/R, \chi)$ holds for some $\chi \in \Ir^-(G)$. 
\end{thm}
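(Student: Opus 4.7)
The plan is to exploit the fact that only one character $\chi \in \Ir^-(G)$ is required: I will pick a well-chosen one-dimensional totally odd character $\chi$ of $G = D_n$, reduce $\GKC(K/R,\chi)$ via Lemma~\ref{lemma:equivalences} to $\GKC^-(R_\chi)$ for the quadratic CM-subfield $R_\chi \subseteq K$, and then apply Theorem~\ref{thm:weihnacht} to the abelian extension $R_\chi/R$. The congruence $n \equiv 2 \pmod 4$ is precisely what is needed to guarantee the existence of such a $\chi$.

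Write $n = 2m$ with $m$ odd, and present $D_n = \langle r, s \mid r^n = s^2 = 1,\ srs = r^{-1} \rangle$. Since $K^+$ is normal over $R$, complex conjugation $\tau$ is central in $G$, and the centre of $D_n$ equals $\{1, r^m\}$; hence $\tau = r^m$. The abelianisation $G^{ab} \cong G/\langle r^2\rangle \cong \Z/2 \times \Z/2$ sends $\tau$ to a non-trivial element exactly because $m$ is odd, so among the four one-dimensional characters of $G$ there are exactly two satisfying $\chi(\tau) = -1$. I fix any such $\chi$; then $R_\chi := K^{\ker(\chi)}$ is a quadratic CM-extension of $R$ contained in $K$.

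Before invoking Lemma~\ref{lemma:equivalences}, I check the hypothesis $K \cap R_\infty = R$. The assumption $R_{\p_1} = \Q_p$ means that the completion of $R_\infty$ at $\p_1$ is the cyclotomic $\Z_p$-extension of $\Q_p$, which is totally ramified over $\Q_p$; so $\p_1$ is totally ramified in every non-trivial sub-$\Z_p$-extension of $R_\infty$. On the other hand, $\p_1$ is unramified in $K$ by (a), forcing $K \cap R_\infty = R$. Lemma~\ref{lemma:equivalences} then identifies $\GKC(K/R,\chi)$ with $\GKC(R_\chi/R, \chi)$, which in turn is just $\GKC^-(R_\chi)$ since $R_\chi/R$ is quadratic CM.

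The heart of the argument, and the step I expect to be the main obstacle, is to show that Theorem~\ref{thm:weihnacht} applied to the abelian extension $R_\chi/R$ yields $r - s = 0$. Here $s = [R_\chi:R]/2 = 1$, hypothesis (a) of that theorem is inherited from our (a), and $r$ is the number of primes of $R = (R_\chi)^+$ above $p$ that split in $R_\chi$. Clearly $\p_1$ splits in $R_\chi$ (being totally split already in $K$). For $i \ge 2$, condition (b) says that the primes of $K^+$ above $\p_i$ are non-split in the quadratic extension $K/K^+$, so $\tau$ lies in the decomposition group $D_{\p_i}(K/K^+) \subseteq D_{\p_i}(K/R)$ of any prime of $K$ above $\p_i$ (the choice being immaterial by centrality of $\tau$). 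Since $\chi(\tau) = -1 \ne 1$, $\chi$ is non-trivial on $D_{\p_i}$, and for the quadratic extension $R_\chi/R$ this is equivalent to $\p_i$ being non-split in $R_\chi$. Hence $r = 1 = s$, Theorem~\ref{thm:weihnacht} gives $\rg_{\Z_p}((A'^{(R_{\chi,\infty})})^-_\Gamma) = 0$, i.e.\ $\GKC^-(R_\chi)$, and combined with the reduction above this yields $\GKC(K/R,\chi)$.
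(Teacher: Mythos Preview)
Your proof is correct, but the route differs from the paper's. The paper applies Theorem~\ref{thm:weihnacht} to the abelian extension $K/\tilde{R}$, where $\tilde{R}$ is the fixed field of the rotation subgroup $\langle a\rangle\subseteq D_n$; this yields $\rg_{\Z_p}((A'^{(K_\infty)})^-_\Gamma)\le r-s=n-\tfrac{n}{2}=\tfrac{n}{2}$ for the full Iwasawa module of $K_\infty$. It then counts: $\sum_{\chi\in\Ir^-(G)}\chi(1)=\tfrac{n-2}{4}\cdot 2+2=\tfrac{n}{2}+1$, and invokes the divisibility $\chi(1)\mid\ord_{T=0}f_{A',\chi}(T)$ coming from equation~\eqref{eq:T-ord} in the proof of Theorem~A. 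Since $\tfrac{n}{2}+1>\tfrac{n}{2}$, not every $\ord_{T=0}f_{A',\chi}(T)$ can be a positive multiple of $\chi(1)$, so some $\chi$ works.

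Your argument instead singles out a specific one-dimensional odd $\chi$, descends via Lemma~\ref{lemma:equivalences} to the imaginary quadratic subfield $R_\chi$, and applies Theorem~\ref{thm:weihnacht} (equivalently Corollary~\ref{cor_GVC}) to $R_\chi/R$ with $r=s=1$. This is more direct and constructive: you actually name the character, you avoid the pigeonhole step, and you do not need the divisibility input from equation~\eqref{eq:T-ord}. On the other hand, the paper's argument is deliberately chosen to showcase how the rank inequality of Theorem~\ref{thm:weihnacht} can yield information even when $r-s>0$, working with the $\chi$-decomposition of $(A'^{(K_\infty)})^-$ rather than passing to a smaller field; in principle its counting could also land on one of the two-dimensional odd characters.
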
 
  \begin{proof} 
    Write $G = D_n$ as 
    $$ G = \langle a, b \mid a^n = b^2 = 1, b a b = a^{-1} \rangle. $$ 
    Then the complex conjugation $\tau$ corresponds to the only non-trivial element $a^{n/2}$ in the center of $G$. We let $\tilde{R}$ be the subfield of $K$ which is fixed by the subgroup $\langle a \rangle$ generated by $a$. Then $\tilde{R}$ is a degree 2 extension of $R$, and it is totally real because $\tilde{R} \subseteq K^+ = K^{\langle \tau \rangle}$. It follows from Theorem \ref{thm:weihnacht}, applied to the extension $K/\tilde{R}$, that 
    $$ \rg_{\Z_p}((A')^-/(T \cdot (A')^-)) \le r - s, $$ 
    where $r$ denotes the number of primes of $K^+$ above $p$ which split in $K$, i.e. 
    $$r = \frac{[K:R]}{2} = n$$ by our assumptions, and where $s := \frac{[K : \tilde{R}]}{2} = \frac{n}{2}$. Now the representation theory of the dihedral group $D_{n}$ implies that $\Ir^-(G)$ contains $\frac{n-2}{4}$ characters of dimension 2 and two characters of dimension 1. 
    
    Since the proof of Theorem~A, and in particular equation \eqref{eq:T-ord}, implies that $\ord_{T = 0} f_{A', \chi}(T)$ is divisible by $\chi(1)$, and as 
    $$ \sum_{\chi \in \Ir^-(G)} \chi(1) = \frac{n-2}{4} \cdot 2 + 2 \cdot 1 = \frac{n}{2} + 1 > \frac{n}{2} = r - s$$ 
    by the above, it follows that $\ord_{T=0} f_{A', \chi}(T) = 0$ for at least one $\chi \in \Ir^-(G)$. 
  \end{proof} 

\bibliography{gross} 

\begin{thebibliography}{{Lan}90}

\bibitem[{Bru}67]{Bru67}
Armand {Brumer}.
\newblock {On the units of algebraic number fields.}
\newblock {\em {Mathematika}}, 14:121--124, 1967.

\bibitem[Bur20]{Bur18}
David Burns.
\newblock On derivatives of {$p$}-adic {$L$}-series at {$s$} = 0.
\newblock {\em J. Reine Angew. Math.}, 762:53--104, 2020.

\bibitem[CD14]{ChDa14}
Pierre Charollois and Samit Dasgupta.
\newblock Integral {E}isenstein cocycles on {${\bf GL}_n$}, {I}: {S}czech's
  cocycle and {$p$}-adic {$L$}-functions of totally real fields.
\newblock {\em Camb. J. Math.}, 2(1):49--90, 2014.

\bibitem[CR62]{CuRe62}
Charles~W. Curtis and Irving Reiner.
\newblock {\em Representation theory of finite groups and associative
  algebras}.
\newblock Pure and Applied Mathematics, Vol. XI. Interscience Publishers, a
  division of John Wiley \& Sons, New York-London, 1962.

\bibitem[DKV18]{DaKaVe18}
Samit Dasgupta, Mahesh Kakde, and Kevin Ventullo.
\newblock {On the {G}ross-{S}tark conjecture}.
\newblock {\em Ann. of Math. (2)}, 188(3):833--870, 2018.

\bibitem[DR80]{DeRi80}
Pierre Deligne and Kenneth~A. Ribet.
\newblock Values of abelian {$L$}-functions at negative integers over totally
  real fields.
\newblock {\em Invent. Math.}, 59(3):227--286, 1980.

\bibitem[Ems83]{Ems83}
Michel Emsalem.
\newblock Comportement des fonctions {$L$} {$p$}-adiques au voisinage de
  z\'{e}ro.
\newblock In {\em Study group on ultrametric analysis, 9th year: 1981/82, {N}o.
  1}, pages Exp. No. 17, 19. Inst. Henri Poincar\'{e}, Paris, 1983.

\bibitem[FG81]{FeGr81}
Leslie~Jane Federer and Benedict~H. Gross.
\newblock {Regulators and {I}wasawa modules}.
\newblock {\em Invent. Math.}, 62(3):443--457, 1981.
\newblock With an appendix by Warren Sinnott.

\bibitem[{Gra}94]{gras}
Georges {Gras}.
\newblock {Classes g\'en\'eralis\'ees invariantes.}
\newblock {\em {J. Math. Soc. Japan}}, 46(3):467--476, 1994.

\bibitem[{Gre}73]{green-l-adic}
Ralph {Greenberg}.
\newblock {On a certain $l$-adic representation.}
\newblock {\em {Invent. Math.}}, 21:117--124, 1973.

\bibitem[Gre83]{Gre83}
Ralph Greenberg.
\newblock On {$p$}-adic {A}rtin {$L$}-functions.
\newblock {\em Nagoya Math. J.}, 89:77--87, 1983.

\bibitem[Gro81]{Gro81}
Benedict~H. Gross.
\newblock {{$p$}-adic {$L$}-series at {$s=0$}}.
\newblock {\em J. Fac. Sci. Univ. Tokyo Sect. IA Math.}, 28(3):979--994 (1982),
  1981.

\bibitem[{Jau}86]{Jau86}
Jean-Fran\c{c}ois {Jaulent}.
\newblock {\em L'arithm\'{e}tique des {$l$}-extensions}.
\newblock Publications Math\'{e}matiques de la Facult\'{e} des Sciences de
  Besan\c{c}on. Universit\'{e} de Franche-Comt\'{e}, Facult\'{e} des Sciences,
  Besan\c{c}on, 1986.
\newblock Dissertation.

\bibitem[{Jau}02]{Jau02}
Jean-Fran\c{c}ois {Jaulent}.
\newblock {Classes logarithmiques des corps totalement r\'eels.}
\newblock {\em {Acta Arith.}}, 103(1):1--7, 2002.

\bibitem[{Jau}17]{Jau17}
Jean-Fran\c{c}ois {Jaulent}.
\newblock {Normes cyclotomiques na\"{\i}ves et unit\'es logarithmiques.}
\newblock {\em {Arch. Math.}}, 108(6):545--554, 2017.

\bibitem[{Kle}19]{T-ranks}
S\"oren {Kleine}.
\newblock {$T$-ranks of Iwasawa modules.}
\newblock {\em {J. Number Theory}}, 196:61--86, 2019.

\bibitem[Kli90]{Kli90}
Norbert Klingen.
\newblock Leopoldt's conjecture for imaginary {G}alois number fields.
\newblock {\em J. Symbolic Comput.}, 10(6):531--545, 1990.

\bibitem[Kol91]{Kol91}
Manfred Kolster.
\newblock An idelic approach to the wild kernel.
\newblock {\em Invent. Math.}, 103(1):9--24, 1991.

\bibitem[Kuz72]{Kuz72}
Leonid~V. Kuz'min.
\newblock The {T}ate module of algebraic number fields.
\newblock {\em Izv. Akad. Nauk SSSR Ser. Mat.}, 36:267--327, 1972.

\bibitem[Kuz18]{Kuz18}
Leonid~V. Kuz'min.
\newblock Local and global universal norms of the cyclotomic {$\mathbb{
  Z}_{\ell}$}-extension of an algebraic number field.
\newblock {\em Izv. Ross. Akad. Nauk Ser. Mat.}, 82(3):90--107, 2018.

\bibitem[{Lan}90]{lang}
Serge {Lang}.
\newblock {\em {Cyclotomic fields. I and II. With an appendix by Karl Rubin:
  The main conjecture. Combined 2nd edition.}}
\newblock New York etc.: Springer-Verlag, combined 2nd edition, 1990.

\bibitem[NSW08]{NeScWi08}
J\"urgen {Neukirch}, Alexander {Schmidt}, and Kay {Wingberg}.
\newblock {\em {Cohomology of number fields. 2nd ed.}}
\newblock Berlin: Springer, 2nd edition, 2008.

\bibitem[Spi14]{Spi14}
Michael Spiess.
\newblock Shintani cocycles and the order of vanishing of {$p$}-adic {H}ecke
  {$L$}-series at {$s=0$}.
\newblock {\em Math. Ann.}, 359(1-2):239--265, 2014.

\bibitem[Tat84]{Tat84}
John Tate.
\newblock {\em {Les conjectures de {S}tark sur les fonctions {$L$} d'{A}rtin en
  {$s=0$}}}, volume~47 of {\em {Progress in Mathematics}}.
\newblock Birkhäuser Boston, Inc., Boston, MA, 1984.

\bibitem[Wil90]{Wil90}
Andrew Wiles.
\newblock {The {I}wasawa conjecture for totally real fields}.
\newblock {\em Ann. of Math. (2)}, 131(3):493--540, 1990.

\end{thebibliography}

\bibliographystyle{alpha}

\textsc{Ludwig-Maximilians-Universit\"at M\"unchen,
Mathematisches Institut,
Theresienstr. 39, 80333 M\"unchen,
Germany}\\
\textit{Email address:} \href{mailto:hofer@math.lmu.de}{hofer@math.lmu.de}

\hspace{1cm}

\textsc{Universit\"at der Bundeswehr M\"unchen, Institut f\"ur theoretische Informatik, Mathematik und Operations Research, 85577 Neubiberg, Germany} \\
\textit{Email address:} \href{mailto:soeren.kleine@unibw.de}{soeren.kleine@unibw.de}

\end{document}